\newcommand{\tl}{\text{li}}
\newcommand{\R}{\mathds{R}}
\newcommand{\p}{\phantom}
\newcommand{\q}{\quad}
\newtheorem{thm}{Theorem}[section]
\newtheorem{kor}[thm]{Corollary}
\newtheorem{prop}[thm]{Proposition}
\theoremstyle{definition}
\theoremstyle{remark}
\newtheorem*{rema}{Remark}
\title[Estimates for $\pi(x)$ for large values of $x$ and Ramanujan's prime counting inequality]{Estimates for $\pi(x)$ for large values of $x$ and \\ Ramanujan's prime counting inequality}
\author{Christian Axler}
\address{Institute of Mathematics\\ Heinrich-Heine University Düsseldorf\\
40225 Düsseldorf, Germany}
\email{christian.axler@hhu.de}
\subjclass[2010]{Primary 11N05; Secondary 11A41}
\keywords{Chebyshev's $\vartheta$-function, prime counting function, Ramanujan's prime counting inequality}
\begin{document}
\maketitle
\vspace*{-5mm}
\begin{center}
{\small March 2017}
\end{center}
\vspace*{-2mm}
\begin{abstract}
In this paper we use refined approximations for Chebyshev's $\vartheta$-function to establish new explicit estimates for the prime counting function $\pi(x)$, which improve the current best estimates for large values of $x$. As an application we find an upper bound for the number $H_0$ which is defined to be the smallest positive integer so that Ramanujan's prime counting inequality holds for every $x \geq H_0$.
\end{abstract}

\section{Introduction}
Let $\pi(x)$ denotes the number of primes not exceeding $x$. Since there are infinitely many primes, we have $\pi(x) \to \infty$ for $x \to \infty$. In 1793, Gau{\ss} \cite{gauss} stated a conjecture concerning an asymptotic behavior of $\pi(x)$, namely
\begin{equation}
\pi(x) \sim \text{li}(x) \quad\quad (x \to \infty), \tag{1.1} \label{1.1}
\end{equation}
where the \emph{logarithmic integral} $\text{li}(x)$ defined for every real $x \geq 0$ as
\begin{equation}
\text{li}(x) = \int_0^x \frac{dt}{\log t} = \lim_{\varepsilon \to 0} \left \{ \int_{0}^{1-\varepsilon}{\frac{dt}{\log t}} + \int_{1+\varepsilon}^{x}{\frac{dt}{\log t}} \right \} = \int_2^x \frac{dt}{\log t} + 1.04516\ldots. \tag{1.2} \label{1.2}
\end{equation}
The asymptotic formula \eqref{1.1} was proved independently by Hadamard \cite{hadamard1896} and by de la Vall\'{e}e-Poussin \cite{vallee1896} in 1896, and is known as the \textit{Prime Number Theorem}. In his later paper \cite{vallee1899}, where he proved the existence of a zero-free region for the Riemann zeta-function $\zeta(s)$ to the left of the line $\text{Re}(s) = 1$, de la Vall\'{e}e-Poussin also estimated the error term in the Prime Number Theorem by showing
\begin{equation}
\pi(x) = \text{li}(x) + O(x \exp(-a\sqrt{\log x})), \tag{1.3} \label{1.3}
\end{equation} 
where $a$ is a positive absolute constant. The work of  Korobov \cite{korobov1958} and Vinogradov \cite{vinogradov1958} implies a much better result, namely that there is a positive absolute constant $c$ so that
\begin{displaymath}
\pi(x) = \text{li}(x) + O \left( x \exp \left( - c (\log x)^{3/5} (\log \log x)^{-1/5} \right) \right).
\end{displaymath}
In 1901, von Koch \cite{koch1901} deduced under the assumption that the Riemann hypothesis is true a remarkable refinement of the error term, namely
\begin{equation}
\pi(x) = \text{li}(x) + O(\sqrt{x} \log x). \tag{1.4} \label{1.4}
\end{equation}
In 2000, Panaitopol \cite[p. 55]{pan3} gave another asymptotic formula for the prime counting function by showing that for each positive integer $m$, we have
\begin{equation}
\pi(x) = \frac{x}{ \log x - 1 - \frac{k_1}{\log x} - \frac{k_2}{\log^2x} - \ldots -  \frac{k_m}{\log^m x}} + O \left( \frac{x}{\log^{m+2}x}  \right), \tag{1.5} \label{1.5}
\end{equation}
where the positive integers $k_1, \ldots, k_m$ are defined by the recurrence formula
\begin{displaymath}
k_m + 1!k_{m-1} + 2!k_{m-2} + \ldots + (m-1)!k_1 = m \cdot m!.
\end{displaymath}
For instance, we have
\begin{displaymath}
k_1 = 1, \, k_2 = 3, \, k_3 = 13, \, k_4 = 71, \, k_5 = 461, \, k_6 = 3\,441.
\end{displaymath}
Hence, the asymptotic formula \eqref{1.5} implies that the inequality that
\begin{equation}
\pi(x) > \frac{x}{\log x - 1 - \frac{1}{\log x} - \frac{3}{\log^2 x} - \ldots - \frac{k_n}{\log^n x}} \tag{1.6} \label{1.6}
\end{equation}
holds for every positive integer $n$ and all sufficently large values of $x$. The first result in this direction is from 1962 and is due to Rosser and Schoenfeld \cite[Corollary 1]{rosser1962}. They showed that the inequality
\begin{equation}
\pi(x) > \frac{x}{\log x} \tag{1.7} \label{1.7}
\end{equation}
holds for every $x \geq 17$. In 1998, Dusart \cite[Th\'{e}or\`{e}me 1.10]{pd1} obtained that
\begin{displaymath}
\pi(x) > \frac{x}{\log x - 1}
\end{displaymath}
for every $x \geq 5393$. The current best result concerning an upper bound which corresponds to
the first terms of \eqref{1.5} is given in \cite[Korollar 1.24]{axler2013} and states that
\begin{displaymath}
\pi(x) > \frac{x}{\log x - 1 - \frac{1}{\log x}}
\end{displaymath}
for every $x \geq 468\,049$. In the following theorem, we make a first progress in finding the smallest positive integer $N_0$ so that the inequality \eqref{6.2} holds for $n=2$ and every $x \geq N_0$.

\begin{thm} \label{thm101}
The inequality
\begin{displaymath}
\pi(x) > \frac{x}{\log x - 1 - \frac{1}{\log x} - \frac{3}{\log^2 x}}
\end{displaymath}
holds for every $x$ such that $65\,405\,887 \leq x \leq 2.73 \cdot 10^{40}$ and every $x \geq e^{580044/13}$.
\end{thm}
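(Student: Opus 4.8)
The plan is to deduce the desired lower bound for $\pi(x)$ from a lower bound for Chebyshev's function $\vartheta(x) = \sum_{p \le x}\log p$ by means of the partial summation identity
\[
\pi(x) = \frac{\vartheta(x)}{\log x} + \int_2^x \frac{\vartheta(t)}{t\log^2 t}\,dt,
\]
valid for every $x \ge 2$. Writing $f(x) = \log x - 1 - 1/\log x - 3/\log^2 x$, the assertion is equivalent to $\pi(x)f(x) > x$, so it suffices to produce a lower bound $L(x)$ for the right-hand side of the identity with $L(x)f(x) > x$. Expanding $x/f(x)$ as a power series in $v = 1/\log x$ gives $x/f(x) = (x/\log x)(1 + v + 2v^2 + 6v^3 + 11v^4 + \cdots)$, whose coefficients agree with those of $\text{li}(x) = (x/\log x)\sum_{k\ge 0} k!\,v^k$ up to order $v^3$ and first differ at order $v^4$, where $24 - 11 = 13$. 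Hence heuristically $\text{li}(x) - x/f(x) \sim 13x/\log^5 x$, and the whole problem is to show that this gap outweighs the error made in approximating $\pi(x)$ by $\text{li}(x)$. This explains the two disjoint ranges and, I expect, the very shape of the threshold $e^{580044/13}$, in which the coefficient $13$ reappears.

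For the unbounded range $x \ge e^{580044/13}$ I would insert into the identity the explicit unconditional lower bound for $\vartheta(x)$ coming from the de la Vall\'{e}e--Poussin--type zero-free region (as quantified in the estimates stated earlier in the paper), of the form $\vartheta(x) \ge x - \varepsilon(x)\,x$ with an explicit $\varepsilon(x)$ decaying faster than any negative power of $\log x$. I would bound the integral from below by splitting it at a fixed base point $x_0$, estimating the tail $\int_{x_0}^x$ by the same $\vartheta$-bound and carrying the finite piece $\int_2^{x_0}$ as a harmless constant. Collecting terms reduces $\pi(x)f(x) > x$ to a single-variable inequality in $u = \log x$; since the dominant positive contribution is of size $13x/\log^5 x$ while the error contributes less once $u \ge 580044/13$, the reduced inequality holds on the whole half-line, which I would confirm by checking the sign and monotonicity of the resulting explicit function of $u$.

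For the finite range $65\,405\,887 \le x \le 2.73\cdot 10^{40}$ the unconditional bound is far too weak, so here I would instead use the much sharper explicit approximation for $\vartheta(x)$ available on this interval (the numerically certified, Riemann-hypothesis-verified bound stated earlier and valid up to about $10^{40}$). Substituting it into the identity and repeating the reduction yields an inequality in $u = \log x$ that now only has to be verified on the bounded interval $\log(65\,405\,887) \le u \le \log(2.73\cdot 10^{40})$, which I would handle by exhibiting monotonicity of the relevant difference together with a check at the endpoints. Because the integral accumulates from below, some care is needed in anchoring it: rather than integrating from $2$, I would start the sharp $\vartheta$-bound at a point where it is known to hold and add back the exact value of $\pi$ there.

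The main obstacle is the behaviour near the lower endpoint $x = 65\,405\,887$, where the asymptotic margin $13x/\log^5 x$ is still comparatively small and the cumulative integral must be controlled very tightly; here the sharpest available form of the $\vartheta$-estimate is essential, and it may be necessary to supplement the analytic argument with a direct numerical verification of $\pi(x)f(x) > x$ at and just above the endpoint, using tabulated values of $\pi$, before the monotonicity argument takes over. Pinning down the exact constants $65\,405\,887$ and $580044/13$ is precisely the step where the explicit error terms from the two $\vartheta$-regimes must be optimized against the $13x/\log^5 x$ gap.
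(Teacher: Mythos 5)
Your treatment of the unbounded range $x \ge e^{580044/13}$ is essentially the paper's own argument: the paper also starts from the identity $\pi(x) = \vartheta(x)/\log x + \int_2^x \vartheta(t)/(t\log^2 t)\,dt$, inserts the unconditional zero-free-region bound $|\vartheta(x)-x| < 580115\,x/\log^5 x$ (Corollary \ref{kor202}), anchors the integral at $x_1 = 10^{13}$, and compares the resulting minorant $J_{5,-580115,x_1}$ with the auxiliary function $f(x) = x/\bigl(\log x - 1 - \tfrac{1}{\log x} - \tfrac{3}{\log^2 x} - \tfrac{13}{\log^3 x} + \tfrac{580044}{\log^4 x}\bigr)$; your heuristic for the threshold (the gap $13x/\log^5 x$ must beat an error of size roughly $580044\,x/\log^6 x$) is exactly where $e^{580044/13}$ comes from. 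One technical point your sketch glosses over: after inserting the $\vartheta$-bound, the resulting lower bound for $\pi(x)$ is \emph{not} an explicit elementary function of $u=\log x$, because the logarithmic-integral-type term $\int_{x_1}^x dt/\log^2 t$ survives. The paper circumvents this by comparing \emph{derivatives} (which are rational functions of $\log x$, since differentiation removes the integral) and then checking a single numerical inequality at the anchor point, using the known value of $\pi(10^{13})$ and a certified bound on $\vartheta(10^{13})$. Some such device is needed to make your ``sign and monotonicity of the resulting explicit function of $u$'' step rigorous.

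The finite range $65\,405\,887 \le x \le 2.73\cdot 10^{40}$ is where your proposal genuinely breaks down. You propose to use a ``numerically certified, Riemann-hypothesis-verified bound stated earlier and valid up to about $10^{40}$''; no such bound exists, in this paper or elsewhere. The sharp estimates obtained from partial verification of RH (Büthe's Theorem 2, which the paper uses for Theorem \ref{thm103}) are valid only up to $10^{19}$, far short of $2.73\cdot 10^{40}$. In the paper, the endpoint $2.73\cdot 10^{40}$ has nothing to do with the range of validity of any $\vartheta$- or $\pi$-estimate: the paper invokes an \emph{unconditional} lower bound with a longer denominator expansion, $\pi(t) > t/\bigl(\log t - 1 - \tfrac{1}{\log t} - \tfrac{2.85}{\log^2 t} - \tfrac{13.15}{\log^3 t} - \tfrac{70.7}{\log^4 t} - \tfrac{458.7275}{\log^5 t} - \tfrac{3428.7225}{\log^6 t}\bigr)$, valid for all $t \ge 19\,033\,744\,403$, and $2.73\cdot 10^{40}$ is simply the largest point up to which this denominator stays below $\log t - 1 - \tfrac{1}{\log t} - \tfrac{3}{\log^2 t}$ (the deficit $0.15/\log^2 t$ must be covered by the extra negative terms, which fails once $\log t$ is roughly $93$). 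Moreover, the remaining stretch $65\,405\,887 \le x \le 19\,033\,744\,403$ is handled by a computer check at consecutive primes over the \emph{entire} interval --- on the order of $8\cdot 10^8$ primes --- not merely ``at and just above the endpoint'' as you suggest: with only zero-free-region bounds at one's disposal, no analytic argument takes over anywhere near $6.5\cdot 10^7$, so the computational component is a large and essential part of the proof, not a supplement.
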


Integration of parts in \eqref{1.3} implies that the asymptotic expansion
\begin{equation}
\pi(x) = \frac{x}{\log x} + \frac{x}{\log^2 x} + \frac{2x}{\log^3 x} + \frac{6x}{\log^4 x} + \ldots + \frac{(m-1)! x}{\log^mx}+ O \left( \frac{x}{\log^{m+1} x} \right) \tag{1.8} \label{1.8}
\end{equation}
holds for each positive integer $m$, which implies that there exists a smallest positive integer $g_1(n) \geq 2$ so that the inequality
\begin{displaymath}
\pi(x) > \frac{x}{\log x} + \frac{x}{\log^2 x} + \frac{2x}{\log^3 x} + \frac{6x}{\log^4 x} + \frac{24x}{\log^5 x} + \ldots + \frac{(n-1)!x}{\log^nx}
\end{displaymath}
holds for every positive integer $n$ and every $x \geq g_1(n)$. Again, the inequality \eqref{1.7}, obtained by Rosser and Schoenfeld \cite[Corollary 1]{rosser1962}, was the first result concerning an upper bound which corresponds to the first terms of \eqref{1.8}. Dusart \cite[Th\'{e}or\`{e}me 1.10]{pd1} found in 1998 that $g_1(2) = 599$. In 2010, he \cite[Theorem 6.9]{dusart2010} improved his own result by showing that $g_1(3) = 88\,783$. In the following theorem, we go one step further by finding an upper bound for the smallest positive integer $g_1(4)$.

\begin{thm} \label{thm102}
The inequality
\begin{displaymath}
\pi(x) > \frac{x}{\log x} + \frac{x}{\log^2 x} + \frac{2x}{\log^3 x} + \frac{6x}{\log^4 x}
\end{displaymath}
holds for every $x$ such that $10\,384\,261 \leq x \leq 2.73 \cdot 10^{40}$ and every $x \geq e^{6719}$.
\end{thm}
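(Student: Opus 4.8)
The plan is to reduce the claimed lower bound for $\pi(x)$ to an explicit lower bound for Chebyshev's function $\vartheta(x)=\sum_{p\le x}\log p$ by partial summation, and then to compare the resulting main term against the target. Writing $F(x)=\frac{x}{\log x}+\frac{x}{\log^2 x}+\frac{2x}{\log^3 x}+\frac{6x}{\log^4 x}$, I would start from the identity
\[
\pi(x)=\frac{\vartheta(x)}{\log x}+\int_2^x\frac{\vartheta(t)}{t\log^2 t}\,dt ,
\]
valid for $x\ge 2$. Replacing $\vartheta(t)$ by its ``perfect'' value $t$ turns the right-hand side into $\frac{x}{\log x}+\int_2^x\frac{dt}{\log^2 t}=\tl(x)+c_0$ for an explicit constant $c_0>0$, and a short computation with derivatives shows $\frac{d}{dx}(\tl(x)-F(x))=\frac{24}{\log^5 x}>0$, so that $\tl(x)-F(x)$ is increasing and of size about $24x/\log^5 x$. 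Thus the main term already beats $F(x)$ by a surplus of order $x/\log^5 x$, and the whole theorem reduces to keeping the error caused by replacing $\vartheta(t)$ by $t$ below this surplus.

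Quantitatively, if an explicit bound $\vartheta(t)>t-E(t)$ is available, then (applying it for $t$ beyond its threshold and absorbing the bounded contribution below into $c_0$)
\[
\pi(x)-F(x)\ge \bigl(\tl(x)-F(x)\bigr)+c_0-\frac{E(x)}{\log x}-\int_2^x\frac{E(t)}{t\log^2 t}\,dt ,
\]
and since the dominant error contribution is $E(x)/\log x$, it suffices to have $E(x)$ of size at most $\eta\,x/\log^4 x$ with $\eta$ comfortably below the surplus coefficient $24$. I would therefore feed in the refined unconditional approximation for $\vartheta$ developed earlier in the paper. For $x\ge e^{6719}$ this estimate is strong enough that $\eta<24$ with room to spare, and the surplus argument closes; the threshold $e^{6719}$ is precisely the point beyond which the unconditional $\vartheta$-bound attains the required $x/\log^4 x$-strength. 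It is the larger surplus coefficient here, $24$ against the $13$ governing the Panaitopol-type bound of Theorem~\ref{thm101}, that permits a weaker—and hence lower-threshold—$\vartheta$-estimate, which is what lets the cut-off drop from $e^{580044/13}$ down to $e^{6719}$.

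For the window $65\,405\,887\le x\le 2.73\cdot10^{40}$ I would not redo the analysis but invoke Theorem~\ref{thm101}: expanding in powers of $1/\log x$ gives
\[
\frac{x}{\log x-1-\tfrac1{\log x}-\tfrac3{\log^2 x}}=F(x)+\frac{11x}{\log^5 x}+\cdots > F(x)
\]
for all sufficiently large $x$, so the stronger inequality of Theorem~\ref{thm101} implies the present one on its range. It then remains to settle the initial segment $10\,384\,261\le x<65\,405\,887$, a finite range containing only a few million primes; there I would verify $\pi(x)>F(x)$ directly by a staircase computation, using that $F$ is continuous and increasing while $\pi$ is a step function, so that checking the inequality just above each prime suffices. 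This finite check is what pins down the exact lower endpoint $10\,384\,261$.

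The main obstacle is the moderate regime, not the asymptotics. Near the lower endpoint the surplus $\tl(x)-F(x)$ is only a few hundred, whereas even the Riemann-hypothesis-strength bound $|\pi(x)-\tl(x)|\le \frac{1}{8\pi}\sqrt{x}\log x$ (unconditional here because the zeros of $\zeta$ have been verified up to the height corresponding to $x\le 2.73\cdot10^{40}$) is far larger for $x$ up to roughly $10^{12}$. Hence no purely analytic estimate can certify the inequality just above $10\,384\,261$: the endpoint must come from exact values of $\pi(x)$, and the delicate point is to organize the finite verification efficiently and to confirm that $\pi(x)-F(x)$ never dips back below zero between the computed endpoint and the region where the surplus provably dominates the error. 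Finally, the genuine gap $2.73\cdot10^{40}<x<e^{6719}$ is left uncovered by design, reflecting the current distance between the RH-verified range and the reach of the unconditional $\vartheta$-estimates.
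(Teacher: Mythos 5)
Your proposal follows essentially the same route as the paper: for $x\ge e^{6719}$ the paper transfers its refined $\vartheta$-estimate to $\pi(x)$ through the identity \eqref{2.1} (Propositions \ref{prop401}--\ref{prop403}) and wins against the surplus term $24x/\log^5 x$; for $65\,405\,887\le x\le 2.73\cdot 10^{40}$ it deduces the inequality algebraically from Theorem \ref{thm101} (your series expansion with leading correction $11x/\log^5 x$ is exactly the paper's identity $x/S(\log x)-U(x)=xT(\log x)/(S(\log x)\log^6 x)>0$); and the initial segment down to $10\,384\,261$ is settled by the same prime-by-prime check $\pi(p_n)>U(p_{n+1})$. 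The only soft spot is your claim that the integral error term $\int_2^x E(t)/(t\log^2 t)\,dt$ is dominated by $E(x)/\log x$ -- for the exponential-type $\vartheta$-bound it is in fact comparable (the paper's constant $c=3\sqrt{2}/\sqrt{\pi\sqrt{R}}$ allocates a third of the budget to it, which is why the threshold is $e^{6719}$ rather than where the raw $\vartheta$-bound reaches $24x/\log^4 x$-strength), but your insistence that $\eta$ be ``comfortably below $24$'' absorbs this.
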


As an application of the estimates for the prime counting function which hold for all sufficiently large values of $x$, we consider an inequality established by Ramanujan. In one of his notebooks (see Berndt \cite{berndt1994}), Ramanujan used \eqref{1.8} with $n = 5$ to find that
\begin{displaymath}
\pi(x)^2 - \frac{ex}{\log x} \pi \left( \frac{x}{e} \right)  = - \frac{x^2}{\log^6x} + O \left( \frac{x}{\log^7 x} \right)
\end{displaymath}
and concluded that the inequality
\begin{equation}
\pi(x)^2 < \frac{ex}{\log x} \pi \left( \frac{x}{e} \right) \tag{1.9} \label{1.9}
\end{equation}
holds for all sufficiently large values of $x$. The inequality \eqref{1.9} is called \textit {Ramanujan's prime counting inequality}. The problem arose to find the smallest integer $H_0$ so that the inequality \eqref{1.9} holds for every real $x \geq H_0$. Under the assumption that the Riemann hypothesis is true (RH), Hassani \cite[Theorem 1.2]{hassani2012} has given the upper bound 
\begin{displaymath}
RH \; \Rightarrow \; H_0 \leq 138\,766\,146\,692\,471\,228.
\end{displaymath}
In 2015, Dudek and Platt \cite[Lemma 3.2]{dudekplatt} refined Hassani's result by showing
\begin{equation}
RH \; \Rightarrow \; H_0 \leq 1.15 \cdot 10^{16}. \tag{1.10} \label{1.10}
\end{equation}
Wheeler, Keiper and Galway (see Berndt \cite[p. 113]{berndt1994}) attempted to determine the value of $H_0$, but they failed. Nevertheless, Galway found that the largest prime up to $10^{11}$ for which the inequality \eqref{1.9} fails is $x=38\,358\,837\,677$. Hence
\begin{displaymath}
H_0 > 38\,358\,837\,677.
\end{displaymath}
Dudek and Platt \cite[Theorem 1.3]{dudekplatt} showed by computation that $x = 38\,358\,837\,682$ is the largest integer counterexample below $10^{11}$ and that there are no more failures at integer values before $1.15 \cdot 10^{16}$. Hence the inequality \eqref{1.9} holds unconditionally for every $x \in I_0$, where $I_0 = [38\,358\,837\,683, 1.15 \cdot 10^{16}]$. Together with \eqref{1.10},
\begin{equation}
RH \; \Rightarrow \; H_0 = 38\,358\,837\,683. \tag{1.11} \label{1.11}
\end{equation}
Based on a result of Büthe \cite[Theorem 2]{buethe}, we extend the interval $I_0$, in which the inequality \eqref{1.9} holds unconditionally by showing the following theorem.

\begin{thm} \label{thm103}
Ramanujan's prime counting inequality \eqref{1.9} holds unconditionally for every $x$ such that $38\,358\,837\,683 \leq x \leq 10^{19}$.
\end{thm}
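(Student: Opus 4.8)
The plan is to cover the interval in two pieces. On $[38\,358\,837\,683,\,1.15\cdot 10^{16}]=I_0$ the inequality \eqref{1.9} is already known to hold unconditionally, as recorded above from the computation of Dudek and Platt, so it remains only to establish \eqref{1.9} on $[1.15\cdot 10^{16},\,10^{19}]$ and to glue the two ranges. For the second piece I would replace the two occurrences of $\pi$ in \eqref{1.9} by sharp one-sided envelopes: an upper bound for $\pi(x)$ and a lower bound for $\pi(x/e)$. If $\pi(x)\le\pi_+(x)$ and $\pi(x/e)\ge\pi_-(x/e)$ with continuous functions $\pi_\pm$, then the single sufficient inequality $\pi_+(x)^2<\tfrac{ex}{\log x}\pi_-(x/e)$ implies \eqref{1.9} for every real $x$ in the range at once, which neatly sidesteps the fact that $\pi$ is only a step function.

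For the envelopes I would use the two facts that are available precisely up to $10^{19}$. First, $\pi(x)<\tl(x)$ holds throughout $2\le x\le 10^{19}$ (we are far below the first sign change of $\tl-\pi$), giving $\pi_+=\tl$; this one-sided bound is crucial, since it removes the otherwise fatal contribution coming from $2\,\tl(x)\,|\pi(x)-\tl(x)|$ in $\pi(x)^2$. Second, Büthe's Theorem~2, which ultimately rests on the refined control of Chebyshev's $\vartheta$-function, furnishes an explicit two-sided error bound $|\pi(t)-\tl(t)|\le B(t)$ for $2\le t\le 10^{19}$ with $B(t)\asymp\sqrt t\,\log t$; applied to $t=x/e\le 10^{19}$ it yields $\pi_-(x/e)=\tl(x/e)-B(x/e)$. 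Note that the coarser polynomial lower bounds of Theorems~\ref{thm101}--\ref{thm102} are of no use here, since even truncating $\tl(x/e)$ after four terms already loses a quantity of size $24\,x^2/\log^6 x$, which exceeds the gap we are trying to exploit. The sufficient inequality therefore becomes, writing $G(x):=\frac{ex}{\log x}\tl(x/e)-\tl(x)^2$ for the exact quantity behind Ramanujan's heuristic,
\begin{displaymath}
G(x)>\frac{ex}{\log x}\,B(x/e).
\end{displaymath}

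I would then treat both sides as explicit functions of $x$. Expanding $\tl$ gives $G(x)=\frac{x^2}{\log^6 x}\bigl(1+\tfrac{14}{\log x}+\tfrac{145}{\log^2 x}+\cdots\bigr)>0$, so the gap is of size $x^2/\log^6 x$, while the right-hand side is of size $\tfrac{\sqrt e}{8\pi}\,x^{3/2}$. Setting $L=\log x$ and $x=e^{L}$ collapses the requirement to a single-variable inequality $g(L)>0$ on the short interval $L\in[\log(1.15\cdot 10^{16}),\,\log 10^{19}]\approx[36.98,\,43.75]$, in which the gap contributes a term $\asymp L^{-6}$ and the error a term $\asymp e^{-L/2}$. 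Since $e^{-L/2}$ decays far faster, $g$ is positive and increasing on the upper part of the range, and the whole range is then settled by a lower bound on $g$ at the left endpoint together with an upper bound on the size of $|g'|$ (or, equivalently, a short finite grid combined with monotonicity of $g$).

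The hard part is the left endpoint $x\approx 1.15\cdot 10^{16}$. There the gap $x^2/\log^6 x$ and the error $\asymp\sqrt x\,\log x$ are genuinely of the same order: with the generic explicit Riemann-hypothesis bound $B(t)=\sqrt t\,\log t/(8\pi)$ the two sides agree to within a few percent, the naive bound falling just short, so the inequality survives only because Büthe's unconditional bound is genuinely sharper than this in the range $t\le 10^{19}$ and because the one-sided inequality $\pi(x)<\tl(x)$ has already been invoked. Making this narrow margin explicit at $L\approx 37$ — where \eqref{1.9} is nearly an equality — is the crux of the argument; everything to the right of it is comfortable. Combining the inequality so verified on $[1.15\cdot 10^{16},\,10^{19}]$ with the known validity on $I_0$ then yields \eqref{1.9} on all of $[38\,358\,837\,683,\,10^{19}]$.
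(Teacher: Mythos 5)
Your proposal is correct and is essentially the paper's own proof: both split the interval into a lower range covered by Dudek--Platt's computation and an upper range where Büthe's Theorem 2 supplies the one-sided bound $\pi(t)<\tl(t)$ for $t\le 10^{19}$ together with an explicit lower envelope $\pi(t)>\tl(t)-B(t)$, reducing \eqref{1.9} to the positivity of a single explicit function of $x$, which is then verified by a monotonicity argument plus an evaluation at the left endpoint. Two details differ: Büthe's error term is $B(t)=2.1204\sqrt{t}/\log t$, of order $\sqrt{t}/\log t$ rather than your $\sqrt{t}\,\log t$, so the margin at your split point $1.15\cdot 10^{16}$ is in fact comfortable rather than razor-thin (indeed the paper pushes the analytic argument all the way down to $1.62\cdot 10^{12}$, where the margin genuinely is only a fraction of a percent); and for the computational range the paper spells out the passage from Dudek--Platt's integer verification to all real $x$, using that $\pi$ is a step function and $t\mapsto t/\log t$ is increasing for $t>e$ --- a point your gluing step should also record rather than take as already known.
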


In addition, Dudek and Platt \cite[Theorem 1.2]{dudekplatt} claimed to give an upper bound for $H_0$ which does not depend on the assumption that the Riemann hypothesis is true, namely
\begin{equation}
H_0 \leq e^{9658}. \tag{1.12} \label{1.12}
\end{equation}
After the present author raised some doubts about the correctness of the proof of \eqref{1.12}, one of the authors confirmed (email communication) that the proof of \eqref{1.12} given in \cite{dudekplatt} is not correct. This motivated us to write this paper, where we prove the following even stronger result. In our proof, explicit estimates for the prime counting function which hold for all sufficiently large values of $x$ play an important role.

\begin{thm} \label{thm104}
Ramanujan's prime counting inequality \eqref{1.9} holds unconditionally for every real $x \geq e^{9032}$; i.e.
\begin{displaymath}
H_0 \leq e^{9032}.
\end{displaymath}
\end{thm}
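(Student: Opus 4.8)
The plan is to deduce Ramanujan's inequality from sufficiently sharp explicit bounds for $\pi$: an upper bound for $\pi(x)$ (to control $\pi(x)^2$) and a lower bound for $\pi(x/e)$ (to control $\frac{ex}{\log x}\pi(x/e)$). Write $t=\log x$, so that $\log(x/e)=t-1$; since $x\geq e^{9032}$ forces $x/e\geq e^{9031}$, both arguments lie well inside the range where the large-$x$ estimates of this paper apply, the relevant thresholds (e.g.\ $e^{6719}$ from Theorem~\ref{thm102} and its upper-bound analogue) being far below $e^{9031}$. Using the lower bound for $\pi(x/e)$ coming from the expansion \eqref{1.8} carried to the term $24(x/e)/\log^5(x/e)$ together with an explicit error of order $(x/e)/\log^6(x/e)$, and a matching explicit upper bound $U(x)$ for $\pi(x)$ of the same precision, it suffices to establish the purely analytic inequality
\begin{displaymath}
U(x)^2 \;\leq\; \frac{ex}{\log x}\,L(x/e),
\end{displaymath}
where $L(x/e)$ is the chosen lower bound. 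Both sides equal $x^2/\log^2 x$ times a finite expansion in powers of $1/t$ and $1/(t-1)$, so after dividing by $x^2$ the claim collapses to a single inequality in the one variable $t$.

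The next step is to isolate the positive main term. Expanding, the difference
\begin{displaymath}
\frac{1}{t(t-1)}\sum_{k\ge 0}\frac{k!}{(t-1)^k} \;-\; \frac{1}{t^2}\left(\sum_{k\ge 0}\frac{k!}{t^k}\right)^2
\end{displaymath}
has vanishing coefficients at the orders $t^{-2},\ldots,t^{-5}$---this is exactly Ramanujan's computation behind \eqref{1.9}---and its leading surviving term is a \emph{positive} multiple of $t^{-6}$. Hence the main part of $\frac{ex}{\log x}\pi(x/e)-\pi(x)^2$ is asymptotically $x^2/\log^6 x>0$, and the whole task reduces to showing that, after $\pi$ is replaced by its explicit bounds, this positive main term still dominates every error contribution once $t\geq 9032$.

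The main obstacle is precisely this error control. Writing $\pi=\Pi+e$ with $\Pi$ the truncated expansion and $e$ the signed error, the difference acquires correction terms of shape $\frac{ex}{\log x}\,e(x/e)$ and $2\,\Pi(x)\,e(x)$, each of size $\tfrac{x}{\log x}\cdot e\approx x^2/\log^7 x$ once $e=O(x/\log^6 x)$, the squared error being negligible; this is exactly why $n=5$ precision (rather than the $n=4$ bound of Theorem~\ref{thm102}) is needed, so that the corrections sit one full power of $\log x$ below the main term. The quantitative heart---and where the constant $9032$ is forced---is that the accumulated constants in these corrections, arising from the factorial coefficients $1,1,2,6,24,120,\ldots$ of \eqref{1.8} and from the cross terms produced by squaring, are large; the threshold $e^{9032}$ is essentially the smallest $t=\log x$ at which the coefficient of the positive $t^{-6}$ term overtakes the sum of all these negative contributions. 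I expect this fully explicit bookkeeping to be the bulk of the work.

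Finally, I would package the comparison as a statement $F(t)>0$ for $t\geq 9032$, where $F$ is an explicit rational function of $t$ (plus a controlled error majorant). It then remains to verify $F(9032)>0$ and to bound $F$ below by a manifestly positive expression for all larger $t$---most cleanly by showing $F$ is increasing on $[9032,\infty)$, or by dominating the error terms by a single monotone majorant whose sign is transparent. This yields Ramanujan's prime counting inequality \eqref{1.9} for every real $x\geq e^{9032}$, that is, $H_0\leq e^{9032}$.
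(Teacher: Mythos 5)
Your overall architecture is the same as the paper's: replace $\pi$ by truncated versions of \eqref{1.8} with explicit error terms, expand $\frac{ex}{\log x}\pi(x/e)-\pi(x)^2$, and reduce everything to the positivity of one explicit polynomial in $t=\log x$ for $t\geq 9032$. The genuine gap sits exactly in the step you defer as ``bookkeeping'': the truncation level you choose, $n=5$ (five main terms, error $e(x)=O(x/\log^6x)$), makes the corrections $\frac{ex}{\log x}e(x/e)$ and $2\Pi(x)e(x)$ sit only \emph{one} power of $\log x$ below the main term $x^2/\log^6x$, whose coefficient is exactly $1$. Positivity then forces $\log x > C$, where $C$ is essentially the accumulated error \emph{constant} itself, so everything hinges on how small that constant can be made at $\log x\approx 9031$. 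The explicit constants actually available are: roughly $ca^{5+1/4}\approx 3\cdot 10^{3}$ from converting the zero-free-region bound of Proposition \ref{prop402} into the form $\eta x/\log^6x$ valid for all $x\geq e^{9031}$, \emph{plus} $5!\,2^{6}=7680$ coming from the crude estimate of $\int_2^x dt/\log^{6}t$ used for the upper bound \eqref{4.8}. Accumulated over the three correction terms (two from squaring the upper bound, one from the lower bound at $x/e$) this gives $C\approx 2.4\cdot 10^{4}$, so your method as described proves \eqref{1.9} only for $\log x\gtrsim 24\,000$, far short of $9032$. Even replacing the $7680$ by the sharp remainder $\tl(x)-\sum_{k\leq 5}(k-1)!\,x/\log^kx=(120+o(1))\,x/\log^6x$, the requirement becomes $t>3\delta+O(1)$ with $\delta\approx 3007$ at $t=9031$: a margin of a fraction of a percent that your outline gives no means of certifying. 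Note also that the bounds you invoke do not exist in the needed form: Theorem \ref{thm102} is a one-sided lower bound with no explicit error constant, and the paper contains no ``upper-bound analogue'' of it; the usable two-sided bounds are \eqref{6.1} and \eqref{6.2}.

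The paper escapes this trap by truncating one step further. It takes $n=6$, i.e.\ the expansion through $120x/\log^6x$ with error $\eta x/\log^7x$, where $\eta\approx 2.73\cdot 10^{7}$ for $x\geq e^{9031}$ (inequalities \eqref{6.1}, \eqref{6.2} with $a=14.4086$). The huge constant is harmless because the corrections now sit \emph{two} powers of $\log x$ below the main term: positivity reduces to roughly $t^2+14t\gtrsim 3\eta\approx 8.2\cdot 10^{7}$, which holds comfortably for $t\geq 9032$; this is what the polynomial $f$ in \eqref{6.3} encodes. In particular, the constant $9032$ does not arise, as you suggest, from the factorial coefficients of \eqref{1.8} (those contribute negligibly); it is essentially $\sqrt{3\eta}$, with $\eta$ the converted $\vartheta$-error constant. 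So the decisive missing idea in your plan is that reaching $e^{9032}$ requires the $n=6$ bookkeeping, not $n=5$.
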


In Section 7, we use Theorem \ref{thm103}, Theorem \ref{thm104} and \eqref{1.11} to establish a result concerning a generalized inequality of Ramanujan's prime counting inequality \eqref{1.9}.

\section{On Chebyshev's $\vartheta$-function}

In order to prove Theorem \ref{thm101} and Theorem \ref{thm102}, we first consider Chebyshev's $\vartheta$-function, which is defined by
\begin{displaymath}
\vartheta(x) = \sum_{p \leq x} \log p,
\end{displaymath}
where $p$ runs over primes not exceeding $x$. The prime counting function and Chebyshev's $\vartheta$-function are connected by the well-known identities
\begin{equation}
\pi(x) = \frac{\vartheta(x)}{\log x} + \int_{2}^{x}{\frac{\vartheta(t)}{t \log^{2} t}\ dt}, \tag{2.1} \label{2.1}
\end{equation}
and
\begin{equation}
\vartheta(x) = \pi(x) \log x - \int_{2}^{x}{\frac{\pi(t)}{t}\ dt}, \tag{2.2} \label{2.2}
\end{equation}
which hold for every $x \geq 2$ (see, for instance, Apostol \cite[Theorem 4.3]{ap}). Using \eqref{2.2}, it is easy to see that the Prime Number Theorem is equivalent to
\begin{equation}
\vartheta(x) \sim x \quad\quad (x \to \infty). \tag{2.3} \label{2.3}
\end{equation}
By proving the existence of a zero-free region for the Riemann zeta-function $\zeta(s)$ to the left of the line $\text{Re}(s) = 1$ , de la Vall\'{e}e-Poussin \cite{vallee1899} was abled to bound the error term in \eqref{2.3} by proving
\begin{equation}
\vartheta(x) = x + O(x \exp(-a\sqrt{\log x})), \tag{2.4} \label{2.4}
\end{equation} 
where $a$ is a positive absolute constant. In this direction, we give the following result.

\begin{prop}\label{prop201}
Let $R = 5.573412$. Then,
\begin{equation}
|\vartheta(x) - x| < \frac{\sqrt{8}}{\sqrt{\pi \sqrt{R}}} \, x (\log x)^{1/4} e^{- \sqrt{(\log x)/R}} \tag{2.5} \label{2.5}
\end{equation}
for every $x \geq 3$.
\end{prop}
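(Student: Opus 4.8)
The bound (2.5) is an explicit incarnation of the de la Vallée-Poussin estimate (2.4), so the natural engine is an explicit zero-free region for $\zeta$ of the classical shape, namely that $\zeta(\sigma+it)\neq 0$ whenever $\sigma \geq 1 - 1/(R\log|t|)$ for $|t|$ above a fixed height, with $R = 5.573412$ the admissible constant furnished by the literature. The plan is to prove the corresponding estimate for $\psi(x)=\sum_{p^{k}\leq x}\log p$ first and then transfer it to $\vartheta$, since $\psi$ is the function directly accessible through the zeros of $\zeta$.

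For the analytic core I would begin from the truncated explicit formula
\[
\psi(x) - x = -\sum_{|\gamma|\leq T}\frac{x^{\rho}}{\rho} + E(x,T),
\]
where $\rho = \beta + i\gamma$ runs over the nontrivial zeros and $E(x,T)$ collects the trivial-zero, the $s=0,1$, and the Perron truncation contributions, the last of which is of size $\asymp x(\log x)^{2}/T$. Writing $x^{\rho}=x^{\beta}x^{i\gamma}$ and inserting $\beta \leq 1 - 1/(R\log\gamma)$ gives $x^{\beta}\leq x\exp\!\big(-\tfrac{\log x}{R\log\gamma}\big)$, so with the zero-counting estimate $N(T)\sim\tfrac{T}{2\pi}\log T$ the sum over zeros is dominated by $x\sum_{\gamma\leq T}\tfrac{1}{\gamma}\exp\!\big(-\tfrac{\log x}{R\log\gamma}\big)$. \textbf{The main obstacle is this explicit bookkeeping together with the ensuing optimization.} One balances the zero-sum against the truncation term by taking $\log T$ of order $\sqrt{(\log x)/R}$, and a Laplace/saddle-point evaluation of the resulting $\gamma$-integral then produces exactly the exponential factor $e^{-\sqrt{(\log x)/R}}$, the polynomial factor $(\log x)^{1/4}$, and the leading constant $\sqrt{8/(\pi\sqrt{R})}$. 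Pinning down every constant, rather than settling for an $O$-estimate, is what makes this step laborious; in practice I would import a ready-made explicit $\psi$-bound of precisely this shape and re-optimize only the final constant.

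Once the estimate $|\psi(x)-x| < \frac{\sqrt{8}}{\sqrt{\pi\sqrt{R}}}\,x(\log x)^{1/4}e^{-\sqrt{(\log x)/R}}$ is secured for $x$ beyond some threshold $x_{1}$, I would transfer it to $\vartheta$ through the identity $\psi(x)-\vartheta(x)=\sum_{k\geq 2}\vartheta(x^{1/k})$ together with a known explicit bound of the form $0 \leq \psi(x)-\vartheta(x) \leq c\sqrt{x}(\log x)^{2}$. Because $e^{-\sqrt{(\log x)/R}}$ decays more slowly than any fixed negative power of $x$, this correction is negligible against the right-hand side of (2.5) for all large $x$, so the same inequality persists for $\vartheta$ after a harmless tightening of the implicit margin, again on $x \geq x_{1}$.

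Finally, the analytic argument is effective only above the large threshold $x_{1}$, whereas (2.5) is claimed already from $x \geq 3$. To close this gap I would verify the inequality directly on the finite range $3 \leq x \leq x_{1}$ by computation: $\vartheta$ is a step function jumping at primes, while the right-hand side of (2.5) is smooth, so comparing the tabulated values of $\vartheta(x)$ against the (easily minimized) right-hand side on each interval between consecutive primes reduces the check to finitely many prime points. This verification is lengthy but routine, and combined with the analytic range it establishes (2.5) for every $x \geq 3$.
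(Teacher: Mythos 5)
Your analytic core is essentially the paper's own route: the paper also starts from the Mossinghoff--Trudgian zero-free region with $R = 5.573412$ and, rather than redoing the explicit-formula bookkeeping, inserts that region into a ready-made theorem of Dusart (Theorem 1.1 of \emph{Estimates of $\psi$, $\theta$ for large values of $x$ without the Riemann hypothesis}), which outputs exactly the shape $\frac{\sqrt{8}}{\sqrt{\pi\sqrt{R}}}\,x(\log x)^{1/4}e^{-\sqrt{(\log x)/R}}$ --- precisely the ``import a ready-made explicit bound'' fallback you mention. The $\psi$-to-$\vartheta$ transfer is also unproblematic (Dusart's statement already concerns $\vartheta$). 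So up to that point you and the paper agree.

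The genuine gap is your final step. The threshold $x_1$ produced by this kind of argument is not computationally accessible: in the paper's implementation, Dusart's machinery applied to this zero-free region gives \eqref{2.5} only for $x \geq e^{390} \approx 10^{169}$, and explicit de la Vall\'ee-Poussin-type bounds with this precise constant all carry thresholds of that astronomical order. Direct tabulation of $\vartheta$ is feasible only up to roughly $10^{19}$ (the limit of B\"uthe's computations, used elsewhere in the paper), so ``compare $\vartheta$ with the right-hand side at the primes up to $x_1$'' cannot close the range from $10^{19}$ to $e^{390}$ --- a span of some $150$ orders of magnitude. The paper bridges this intermediate range analytically, not computationally: it invokes a \emph{second} explicit estimate, due to Trudgian and valid already for $x \geq 149$, of the same shape but with the weaker exponent constant $6.455$ and the smaller leading constant $\frac{\sqrt{8}}{\sqrt{17\pi\sqrt{6.455}}}$ (note the extra factor $1/\sqrt{17}$), and verifies that for $149 \leq x \leq e^{390}$ this bound is pointwise smaller than the right-hand side of \eqref{2.5}; only the tiny range $3 \leq x \leq 149$ is left to the computer. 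Without some such second estimate valid from a small threshold (or an argument pushing your $x_1$ below the computational horizon, which the stated method does not give), your proof cannot be completed as written.
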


\begin{proof}
By Mossinghoff and Trudgian \cite[Theorem 1]{mossing}, there are no zeros of the Riemann zeta fuction $\zeta(s)$ for $| \text{Im}(s)| \geq 2$ and
\begin{displaymath}
\text{Re}(s) \geq 1 - \frac{1}{R \log | \text{Im}(s)|}.
\end{displaymath}
Applying this to \cite[Theorem 1.1]{dusart2016}, we get that the required inequality holds for every $x \geq e^{390}$. Further, Trudgian \cite[Theorem 1]{trud} showed that the inequality
\begin{displaymath}
|\vartheta(x) - x| < \frac{\sqrt{8}}{\sqrt{17\pi \sqrt{6.455}}} \, x (\log x)^{1/4} e^{- \sqrt{(\log x)/6.455}}
\end{displaymath}
holds for every $x \geq 149$. We conclude for the case $149 \leq x \leq e^{390}$ by comparing the right hand side of the last inequality with  the right hand side of \eqref{2.5}. For the remaining case $3 \leq x \leq 149$, we check the desired inequality with a computer.
\end{proof}

Now, we use Proposition \ref{prop201} to obtain the following result concerning an explicit estimates for the distance between $x$ and $\vartheta(x)$, which we use in the proof of Theorem \ref{thm101}.

\begin{kor} \label{kor202}
For every $x \geq 2$, we have
\begin{displaymath}
\vert \vartheta(x) - x \vert < \frac{580115x}{\log^5 x}.
\end{displaymath}
\end{kor}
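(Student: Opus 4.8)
The plan is to derive the corollary from Proposition~\ref{prop201} on the ranges where the zero-free-region estimate is already strong enough, and to supply separate, sharper inputs on the complementary range. First I would divide the claimed inequality by the bound in \eqref{2.5}: for $x \geq 3$ it suffices to establish
\[
\frac{\sqrt{8}}{\sqrt{\pi\sqrt{R}}}\,(\log x)^{21/4}\,e^{-\sqrt{(\log x)/R}} \leq 580115 .
\]
Substituting $u = \log x$ and setting $\phi(u) = \frac{\sqrt{8}}{\sqrt{\pi\sqrt{R}}}\,u^{21/4}e^{-\sqrt{u/R}}$, the logarithmic derivative $(\log\phi)'(u) = \frac{21}{4u} - \frac{1}{2\sqrt{uR}}$ has the single positive zero $u^{\ast} = \frac{441}{4}R$, so $\phi$ increases on $(0,u^{\ast})$ and decreases thereafter. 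Consequently $\phi(u) \leq 580115$ holds exactly on an initial interval $u \leq u_1$ and a terminal ray $u \geq u_2$, where $u_1 < u_2$ are the two roots of $\phi(u) = 580115$. A short numerical computation gives $u_1 \approx 17.7$ (and $u_2$ of order several thousand), and this already settles the corollary for $3 \leq x \leq e^{u_1}$ via Proposition~\ref{prop201}, while $2 \leq x < 3$ is checked by hand and the terminal ray $x \geq e^{u_2}$ is likewise settled.

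There remains the band $e^{u_1} \leq x \leq e^{u_2}$, on which \eqref{2.5} is not yet sharp enough. For its lower portion I would call on the estimates available up to the height to which the Riemann Hypothesis has been numerically verified. Up to $x = 10^{19}$ (Platt, and Büthe~\cite{buethe}) one has a bound of the shape $|\vartheta(x) - x| \leq \frac{1}{8\pi}\sqrt{x}\,(\log x)^2$, and this is dominated by $\frac{580115x}{\log^5 x}$ throughout the relevant range, since that inequality is equivalent to $(\log x)^7 \leq 8\pi\cdot 580115\,\sqrt{x}$, which holds comfortably once $x$ is moderately large; the finitely many remaining small values of $x$ are confirmed by evaluating $\vartheta(x)$ directly.

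The step I expect to be the main obstacle is the intermediate window $10^{19} \leq x \leq e^{u_2}$, where neither ingredient is individually decisive: the RH verification does not extend this far, while the relative error $(\log x)^{1/4}e^{-\sqrt{(\log x)/R}}$ furnished by Proposition~\ref{prop201} still exceeds the target $580115/\log^5 x$ throughout. This is precisely the tension that fixes the admissible constant, since $580115$ must be taken just large enough that its crossover point $u_2$ with the zero-free-region error lies beyond the band on which only that error is available. I would therefore interpolate between the RH-verified bound and Proposition~\ref{prop201} using the strongest explicit medium-range estimates for $\vartheta(x)$ at hand, and calibrate $u_2$ so that \eqref{2.5} takes over exactly where those estimates run out; locating this crossover, rather than invoking either limiting estimate alone, is where the genuine work lies.
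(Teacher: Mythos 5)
Your reduction of the problem is sound and matches the paper's first step: dividing by \eqref{2.5} and studying $\phi(u)=\frac{\sqrt{8}}{\sqrt{\pi\sqrt{R}}}u^{21/4}e^{-\sqrt{u/R}}$ is exactly how one finds that Proposition~\ref{prop201} alone settles the claim for $\log x \geq u_2 \approx 5801.149$ (and for small $x$), leaving a long middle band uncovered. But your proposal has a genuine gap precisely where you yourself flag it: the window $10^{19}\leq x\leq e^{u_2}$. ``Interpolating'' between the numerically verified bound (which dies at $10^{19}\approx e^{43.75}$) and the zero-free-region bound (which only becomes useful at $e^{5801.149}$) is not an argument --- there is no continuity principle that bridges two estimates valid on disjoint ranges, and the band $e^{43.75}\leq x\leq e^{5801}$ is astronomically large. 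What is needed, and what you never produce, is a single unconditional explicit estimate valid throughout that band.

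The paper supplies exactly this ingredient: the estimate $\vert\vartheta(x)-x\vert< 100x/\log^{4}x$ for every $x\geq 70\,111$, proved in \cite[Proposition 2.5]{axler2017} (itself an unconditional medium-range bound, not an RH-verification result). Since $100/\log^{4}x\leq 580115/\log^{5}x$ exactly when $\log x\leq 5801.15$, this one estimate covers the entire band $70\,111\leq x\leq e^{5801.15}$, dovetailing with the range $x\geq e^{5801.149}$ where Proposition~\ref{prop201} takes over; a computer check handles $2\leq x\leq 70\,111$. Note also that this corrects your account of how the constant $580115$ is calibrated: it is chosen so that $580115/100=5801.15$ just exceeds the crossover $u_2$ of Proposition~\ref{prop201}, i.e.\ it is calibrated against the constant $100$ in the auxiliary $\log^{-4}$ estimate, not against the height of RH verification. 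With that auxiliary estimate in hand, the bound $\frac{1}{8\pi}\sqrt{x}\log^2 x$ up to $10^{19}$ that you invoke becomes unnecessary altogether.
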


\begin{proof}
We use Proposition \ref{prop201} to get that the required inequality 
holds for every $x \geq e^{5801.149}$.
In \cite[Proposition 2.5]{axler2017}, it is shown that the inequality $\vert \vartheta(x) - x \vert < 100 x/\log^4 x$ holds for every $x \geq 70\,111$, which implies the validity of the required inequality for every $70\,111 \leq x \leq e^{5801.15}$. For the remaining cases, we use a computer.
\end{proof}

\section{Proof of Theorem \ref{thm101}}

Let $k$ be a positive integer, $\eta_k$ and $x_1(k) \geq 2$ positive real numbers so that
\begin{equation}
|\vartheta(x) - x| < \frac{\eta_kx}{\log^k x} \tag{3.1} \label{3.1}
\end{equation}
for every $x \geq x_1(k)$ (The existence of such parameters is guaranteed by \eqref{2.4}). By \eqref{2.1}, we have
\begin{displaymath}
\pi(x) = \pi(x_1(k)) - \frac{\vartheta(x_1(k))}{\log x_1(k)} + \frac{\vartheta(x)}{\log x} +  \int_{x_1(k)}^{x}{\frac{\vartheta(t)}{t\log^{2} t}\ dt}.
\end{displaymath}
Now, we use \eqref{3.1} to derive
\begin{equation}
J_{k,-\eta_k,x_1(k)}(x) \leq \pi(x) \leq J_{k,\eta_k,x_1(k)}(x) \tag{3.2} \label{3.2}
\end{equation}
for every $x \geq x_1(k)$, where
\begin{align}
J_{k,\eta_k,x_1(k)}(x) & = \pi(x_1(k)) - \frac{\vartheta(x_1(k))}{\log x_1(k)} + \frac{x}{\log x} + \frac{\eta_k x}{\log^{k+1} x} + \int_{x_1(k)}^{x}{\left( \frac{1}{\log^{2} t} + \frac{\eta_k}{\log^{k+2} t} \ dt \right)}. \tag{3.3} \label{3.3}
\end{align}
The function $J_{k,\eta_k,x_1(k)}$ given in \eqref{3.3} was already introduced by Rosser and Schoenfeld \cite[p.81]{rosser1962} (for the case $k=1$) and Dusart \cite[p. 9]{dusart2010} and plays an important role in the following proof of Theorem \ref{thm101}


\begin{proof}[Proof of Theorem \ref{thm101}]
First, we verify the validity of the required inequality, i.e.
\begin{equation}
\pi(x) > \frac{x}{\log x - 1 - \frac{1}{\log x} - \frac{3}{\log^2 x}}, \tag{3.4} \label{3.4}
\end{equation}
for every $x \geq e^{580044/13}$. For this, let $k=5$, $x_1 = 10^{13}$ and 
\begin{displaymath}
f(x) = \frac{x}{\log x - 1 - \frac{1}{\log x} - \frac{3}{\log^2x} - \frac{13}{\log^3 x} + \frac{580044}{\log^4 x}}.
\end{displaymath}
Further, we set $g(x) = J_{5, -580115, x_1}(x) - f(x)$. Then,
\begin{displaymath}
g'(x) = \frac{s(\log x)}{(\log^5x - \log^4x - \log^3x - 3\log^2x - 13\log x + 580044)^2\log^7x},
\end{displaymath}
where
\begin{align*}
s(y) & = 580\,576y^{10} - 6\,381\,045y^9 - 4\,060\,210y^8 - 15\,661\,259y^7 - 336\,607\,082\,789y^6 \\
& \phantom{\quad\quad} + 4\,037\,979\,215\,095y^5 - 2\,691\,881\,529\,325y^4 - 1\,345\,840\,694\,825y^3 \\
& \phantom{\quad\quad} - 1\,345\,478\,703\,065y^2 - 195\,224\,040\,181\,960\,440y + 975\,901\,480\,963\,513\,200.
\end{align*}
Since $s(y) > 0$ for every $y \geq \log x_1 \geq 28$, we get that 
\begin{equation}
J'_{5, -580115, x_1}(x) \geq f'(x) \tag{3.5} \label{3.5}
\end{equation}
for every $x \geq x_1$. By Dusart \cite[Table 6.1]{dusart2010}, we have $\vartheta(x_1) \leq 9\,999\,996\,988\,294$. Since $\pi(x_1) = 346\,065\,536\,839$, we use \eqref{3.3} to get $J_{5, -580115, x_1}(x_1) - f(x_1) > 3 \cdot 10^8$. Together with \eqref{3.5}, we obtain that $J_{5, -580115, x_1}(x) > f(x)$ for every $x \geq x_1$. Now, we use \eqref{3.2} and Corollary \ref{kor202} to get that the inequality $\pi(x) \geq f(x)$ holds for every $x \geq x_1$, which implies the validity of \eqref{3.4} for every $x \geq e^{580044/13}$.

In the second step, we show that the inequality \eqref{3.4} is fulfilled for every $10^{12} \leq x \leq 2.73 \cdot 10^{40}$. In \cite[Theorem 3.8]{axler2017}, it is shown that
\begin{displaymath}
\pi(t) > \frac{t}{\log t - 1 - \frac{1}{\log t} - \frac{2.85}{\log^2t} - \frac{13.15}{\log^3t} - \frac{70.7}{\log^4t} - \frac{458.7275}{\log^5t} - \frac{3428.7225}{\log^6t}}
\end{displaymath}
for every $t \geq 19\,033\,744\,403$. A comparsion of the last right hand side with the right hand side of \eqref{3.4} implies that the desired inequality \eqref{3.4} holds for every $19\,033\,744\,403 \leq x \leq 2.73 \cdot 10^{40}$.

To complete the proof, we check with a computer that $\pi(p_n) > s(p_{n+1})$ for every $\pi(65\,405\,887) \leq n \leq \pi(19\,033\,744\,403) + 1$.
\end{proof}

Using a result of Schoenfeld \cite[Corollary 1]{schoenfeld1976}, we obtain the following result.

\begin{prop} \label{prop301}
Under the assumption that the Riemann hypothesis is true, the inequality \eqref{3.4} holds for every $x \geq 65\,405\,887$.
\end{prop}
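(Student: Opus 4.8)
The plan is to combine the unconditional range already secured by Theorem~\ref{thm101} with Schoenfeld's conditional estimate so as to cover everything that remains. Theorem~\ref{thm101} establishes \eqref{3.4} unconditionally for $65\,405\,887 \leq x \leq 2.73\cdot 10^{40}$, so under the assumption of RH it suffices to prove \eqref{3.4} for every $x \geq 2.73\cdot 10^{40}$; the union of the two ranges is then all of $[65\,405\,887,\infty)$. For the tail I would invoke \cite[Corollary~1]{schoenfeld1976}, which under RH gives
\[
\pi(x) > \tl(x) - \frac{\sqrt{x}\,\log x}{8\pi} \qquad (x \geq 2657),
\]
reducing the claim to the purely analytic inequality
\[
\tl(x) - \frac{x}{\log x - 1 - \frac{1}{\log x} - \frac{3}{\log^2 x}} > \frac{\sqrt{x}\,\log x}{8\pi}, \qquad x \geq 2.73\cdot 10^{40}.
\]

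Next I would make the left-hand side explicit. Writing $u = 1/\log x$ and expanding the geometric-type series, the right-hand term of \eqref{3.4} becomes
\[
\frac{x}{\log x - 1 - \frac{1}{\log x} - \frac{3}{\log^2 x}} = \frac{x}{\log x}\Bigl(1 + u + 2u^2 + 6u^3 + 11u^4 + \cdots\Bigr),
\]
so the coefficient of $x/\log^5 x$ is $11$, whereas \eqref{1.8} shows the corresponding coefficient of $\tl(x)$ to be $24$. Thus the main terms differ by $13x/\log^5 x$, and the heart of the matter is a clean one-sided estimate of the form $\tl(x) - (\text{the term in } \eqref{3.4}) \geq 12x/\log^5 x$ for all $x \geq 2.73\cdot 10^{40}$. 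For the lower bound on $\tl(x)$ I would use integration by parts in \eqref{1.2}, giving $\tl(x) = \frac{x}{\log x}+\frac{x}{\log^2 x}+\frac{2x}{\log^3 x}+\frac{6x}{\log^4 x}+\frac{24x}{\log^5 x}+120\int_2^x \frac{dt}{\log^6 t}+c$; since the integral is positive and dwarfs the boundary constant $c$ for such $x$, the displayed partial sum is a genuine lower bound. For the upper bound on the rational term I would truncate the series above after the $u^4$-term and bound the positive tail by $Cu^5$, reducing the whole comparison to a polynomial inequality in $y = \log x$ that is trivially verified for $y \geq \log(2.73\cdot 10^{40}) \approx 93.1$.

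Finally I would close by comparing the two explicit pieces: $12x/\log^5 x > \sqrt{x}\,\log x/(8\pi)$ is equivalent to $96\pi\sqrt{x} > \log^6 x$, and the function $\sqrt{x} - \log^6 x/(96\pi)$ is increasing and already enormously positive at $x = 2.73\cdot 10^{40}$ (there $\sqrt{x}\approx 1.65\cdot 10^{20}$ while $\log^6 x/(96\pi)\approx 2\cdot 10^9$), so the inequality holds throughout the tail with a vast margin. The only genuinely delicate point is the middle step: one must control the truncation remainder of $\tl(x)$ and the tail of the series for the rational function with \emph{explicit} constants, so that the gap estimate $\geq 12x/\log^5 x$ is rigorous uniformly on an unbounded interval rather than merely asymptotic; everything else is a monotone comparison in which the conditional error $\sqrt{x}\,\log x/(8\pi)$ is negligible against the $x/\log^5 x$-sized gap. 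As an alternative, one could instead feed Schoenfeld's companion bound $|\vartheta(x)-x| < \sqrt{x}\,\log^2 x/(8\pi)$ into the function $J_{5,\eta_5,x_1}$ of \eqref{3.3}, since for $x \geq x_1 \geq e^{14}$ this is of the form \eqref{3.1} with a vastly smaller $\eta_5$, which would lower the threshold $e^{580044/13}$ of Theorem~\ref{thm101} below $2.73\cdot 10^{40}$ and let the middle range of that theorem finish the proof.
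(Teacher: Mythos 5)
Your proposal is correct, and it rests on the same two pillars as the paper's own proof --- Schoenfeld's conditional estimate $\pi(x) > \text{li}(x) - \sqrt{x}\log x/(8\pi)$ and the unconditional range of Theorem~\ref{thm101} --- but it implements the connecting analytic step differently. The paper proves the comparison inequality \eqref{3.6} for all $x \geq 10^{12}$ by a monotonicity argument: with $g(x)$ the right-hand side of \eqref{3.4}, it sets $f(x) = \text{li}(x) - \sqrt{x}\log x/(8\pi) - g(x)$, checks $f'(x) > 0$ for $x \geq 233\,671\,227\,509$ and $f(10^{12}) > 0$, and therefore needs Theorem~\ref{thm101} only on the band $[65\,405\,887,\,10^{12}]$. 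You instead invoke the whole middle range of Theorem~\ref{thm101} up to $2.73\cdot 10^{40}$ and prove the comparison only on the far tail by expanding both sides in $u = 1/\log x$ (your coefficients $1,1,2,6,11$ are correct). The truncation step you flag as delicate does close cleanly: one checks $(1+u+2u^2+6u^3+12u^4)(1-u-u^2-3u^3) = 1 + u^4(1-24u-30u^2-36u^3) > 1$ for $0 < u \leq 1/93$, so the rational term in \eqref{3.4} is at most $\sum_{k=1}^{4}(k-1)!\,x/\log^k x + 12x/\log^5 x$, while $\text{li}(x) \geq \sum_{k=1}^{5}(k-1)!\,x/\log^k x$ holds because the remainder $120\int_2^x dt/\log^6 t \geq 120(x-2)/\log^6 x$ swamps the boundary constants; this gives your gap of $12x/\log^5 x$. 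One caveat shows your conservative split point is not merely convenient but necessary for this version of the argument: the closing inequality $96\pi\sqrt{x} > \log^6 x$ is \emph{false} at $x = 10^{12}$ (there $96\pi\sqrt{x} \approx 3.0\cdot 10^{8}$ while $\log^6 x \approx 4.5\cdot 10^{8}$) and first holds only around $5\cdot 10^{12}$, so the crude $12x/\log^5 x$ gap cannot reach the paper's threshold $10^{12}$; the paper's explicit derivative computation is what buys the early crossover, whereas your series bookkeeping is more transparent but must lean on the deep range of Theorem~\ref{thm101}. (Your closing alternative --- feeding Schoenfeld's conditional $\vartheta$-bound into $J_{5,\eta_5,x_1}$ from \eqref{3.3} --- is plausible as well, but it would require redoing the polynomial-positivity computation in the proof of Theorem~\ref{thm101} with new parameters, so as stated it is a sketch rather than a proof.)
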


\begin{proof}
We denote the right hand side of \eqref{3.4} by $g(x)$ and set $h(x) = - \log^8x + 208\pi \sqrt{x}\log^2x + 96\pi \sqrt{x}\log x + 144\pi \sqrt{x}$. Then, $h(x) > 0$ for every $x \geq 233\,671\,227\,509$. Further, we define $f(x) = \text{li}(x) - \sqrt{x} \log x /(8\pi) - g(x)$. Then, $f'(x) \geq h(x)/(16\pi\sqrt{x}(\log^3 x - \log^2x - \log x - 3)^2 \log x) > 0$ for every $x \geq 233\,671\,227\,509$. In addition, we have $f(10^{12}) > 0$. So,
\begin{equation}
\text{li}(x) - \frac{\sqrt{x}}{8\pi} \, \log x > \frac{x}{\log x - 1 - \frac{1}{\log x} - \frac{3}{\log^2 x}} \tag{3.6} \label{3.6}
\end{equation}
for every $x \geq 10^{12}$. Under the assumption that the Riemann hypothesis is true, Schoenfeld \cite[Corollary 1]{schoenfeld1976} showed that the inequality $\pi(x) > \text{li}(x) - \sqrt{x} \log x /(8\pi)$ holds for every $x \geq 2\,657$. We conclude by applying \eqref{3.6} and Theorem \ref{thm101}.
\end{proof}

\section{Proof of Theorem \ref{thm102}}

In this section, we give a proof of Theorem \ref{thm102}. Let $n$ be a positive integer and $R = 5.573412$. Proposition \ref{prop201} implies that the inequality
\begin{equation}
|\vartheta(x) - x| < \frac{a_n(x)x}{\log^n x} \tag{4.1} \label{4.1}
\end{equation}
holds for every $x \geq 3$, where the function $a_n: [2, \infty) \to (0, \infty)$ is defined by
\begin{displaymath}
a_n(x) = \frac{\sqrt{8}}{\sqrt{\pi \sqrt{R}}} \, (\log x)^{n + 1/4} e^{-\sqrt{(\log x)/R}}.
\end{displaymath}
A straightforward calculation shows that the function $a_n(x)$ has a global minimum at $x_0 = e^{(4n+1)^2R/4}$.
%
%
%
For the proof of Theorem \ref{thm104}, we need the following inequality involving the function $a_n(x)$.

\begin{prop} \label{prop401}
For every $x \geq 851$, we have
\begin{displaymath}
\int_3^x \frac{a_n(t)}{\log^{n+2}t} \, dt \leq \frac{\sqrt{2}}{\sqrt{\pi \sqrt{R}}} \cdot \frac{x}{(\log x)^{3/4} e^{\sqrt{\log x/R}}}.
\end{displaymath}
\end{prop}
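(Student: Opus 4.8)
The plan is to reduce the asserted inequality to a single statement free of the parameter $n$, and then to settle it by a routine antiderivative comparison. First I would simplify the integrand: since
\begin{equation*}
\frac{a_n(t)}{\log^{n+2}t} = \frac{\sqrt{8}}{\sqrt{\pi\sqrt{R}}}\,(\log t)^{(n+1/4)-(n+2)}e^{-\sqrt{(\log t)/R}} = \frac{\sqrt{8}}{\sqrt{\pi\sqrt{R}}}\,\frac{e^{-\sqrt{(\log t)/R}}}{(\log t)^{7/4}},
\end{equation*}
the exponent $n$ cancels completely. Dividing the claim through by $\sqrt{2}/\sqrt{\pi\sqrt{R}}$ and using $\sqrt{8}=2\sqrt{2}$, it becomes the single inequality
\begin{equation*}
2\int_3^x \frac{e^{-\sqrt{(\log t)/R}}}{(\log t)^{7/4}}\,dt \leq G(x), \qquad G(x) := \frac{x\,e^{-\sqrt{(\log x)/R}}}{(\log x)^{3/4}},
\end{equation*}
which no longer involves a free parameter, so it suffices to prove it once.

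Next I would set $F(x) = G(x) - 2\int_3^x (\log t)^{-7/4}e^{-\sqrt{(\log t)/R}}\,dt$ and show $F'(x)\ge 0$ for $x\ge 851$. A direct differentiation of $G$ gives
\begin{equation*}
G'(x) = \frac{e^{-\sqrt{(\log x)/R}}}{(\log x)^{3/4}}\left(1 - \frac{3}{4\log x} - \frac{1}{2\sqrt{R}\,\sqrt{\log x}}\right),
\end{equation*}
so that after subtracting the integrand $2(\log x)^{-7/4}e^{-\sqrt{(\log x)/R}}$ and dividing out the common positive factor $(\log x)^{-7/4}e^{-\sqrt{(\log x)/R}}$, the inequality $F'(x)\ge 0$ reduces to $\log x - \tfrac{1}{2\sqrt{R}}\sqrt{\log x} - \tfrac{11}{4}\ge 0$. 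Writing $v=\sqrt{\log x}$, this is the quadratic inequality $v^2 - \tfrac{1}{2\sqrt{R}}v - \tfrac{11}{4}\ge 0$, whose positive root lies well below $v=\sqrt{\log 851}$; hence $F'(x)\ge 0$ holds comfortably for all $x\ge 851$.

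Finally I would dispose of the base case $F(851)\ge 0$, i.e.\ $2\int_3^{851}(\log t)^{-7/4}e^{-\sqrt{(\log t)/R}}\,dt \le G(851)$. Here $G(851)\approx 67.7$, whereas the definite integral is on the order of $16$, so there is a wide margin; the integral can be controlled either by a direct computer evaluation, as is done for the base cases elsewhere in this paper, or by partitioning $[3,851]$ into several subintervals and using that the integrand $(\log t)^{-7/4}e^{-\sqrt{(\log t)/R}}$ is decreasing in $t$. Combining $F(851)\ge 0$ with $F'(x)\ge 0$ on $[851,\infty)$ gives $F(x)\ge 0$ for every $x\ge 851$, which is exactly the assertion. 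I expect the verification of the base case to be the only genuine obstacle, precisely because this integral has no elementary closed form; everything else is the standard differential-inequality argument, and the fortunate cancellation of $n$ means it has to be carried out only once rather than uniformly in $n$.
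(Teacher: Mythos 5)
Your proof is correct, but it takes a genuinely different route from the paper's. Both arguments start from the same key observation that $a_n(t)/\log^{n+2}t=\frac{\sqrt{8}}{\sqrt{\pi\sqrt{R}}}(\log t)^{-7/4}e^{-\sqrt{(\log t)/R}}$ is independent of $n$; from there, the paper substitutes $t=e^{Ry}$, uses convexity of the transformed integrand $y\mapsto e^{Ry}y^{-7/4}e^{-\sqrt{y}}$ to invoke the trapezoidal bound $\int_b^c f\le\tfrac{c-b}{2}(f(b)+f(c))$, and then discards the unwanted endpoint term via the inequality $bf(c)\ge cf(b)$, a monotonicity statement whose failure below $x\approx 851$ is exactly what forces the hypothesis $x\ge 851$ in the paper's argument. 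You instead run a standard differential-inequality argument on $F(x)=G(x)-2\int_3^x(\log t)^{-7/4}e^{-\sqrt{(\log t)/R}}\,dt$. Interestingly, your condition $\log x-\tfrac{1}{2\sqrt{R}}\sqrt{\log x}-\tfrac{11}{4}\ge 0$ is \emph{exactly} the quadratic (in $v=\sqrt{\log x}$) that governs the paper's monotonicity claim (whence the paper's constant $22.75$), so both proofs hinge on the same analytic fact; the difference is how the base point is handled. The paper's route buys freedom from numerical quadrature: it needs only closed-form evaluations plus a convexity check, whereas your base case $2\int_3^{851}(\log t)^{-7/4}e^{-\sqrt{(\log t)/R}}\,dt\le G(851)$ genuinely requires estimating a non-elementary integral. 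That step does go through as you suggest: the integral is about $17.6$, and since the integrand is decreasing, an upper Riemann sum over the partition $\{3,5,10,20,50,100,200,400,851\}$ already gives about $21.5$, so $2\cdot 21.5=43<67.6\approx G(851)$, with ample margin. What your route buys in exchange is transparency and a sharper picture: your $F'\ge 0$ holds from $x\approx 23$ onward and $F(851)\approx 32>0$, which shows the threshold $851$ is an artifact of the paper's method rather than of the inequality itself.
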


\begin{proof}
Let $x \geq 851$. From the definition of $a_n(t)$, we have
\begin{displaymath}
\int_3^x \frac{a_n(t)}{\log^{n+2}t} \, dt = \frac{\sqrt{8}}{\sqrt{\pi \sqrt{R}}} \int_3^x (\log t)^{-7/4} e^{-\sqrt{\log t/R}} \, dt.
\end{displaymath}
The substitution $t  = e^{Ry}$ gives
\begin{equation}
\int_3^x \frac{a_n(t)}{\log^{n+2}t} \, dt = \frac{\sqrt{8}}{R\sqrt{\pi}} \int_{\log 3/R}^{\log x/R} \frac{e^{Ry}}{y^{7/4} e^{\sqrt{y}}} \, dy. \tag{4.2} \label{4.2}
\end{equation}
For convenience, we write $b = \log 3/R$ and $c = \log x/R$, and define $f : [3,c] \to (0, \infty), y \mapsto e^{Ry}/(y^{7/4}e^{\sqrt{y}})$. It is easy to see that the function $f$ is convex on the interval $[b,c]$. Hence,
\begin{equation}
\int_b^c f(y) \, dy \leq \frac{c-b}{2} (f(b) + f(c)). \tag{4.3} \label{4.3}
\end{equation}
The function $g : [3,\infty) \to (0, \infty), y \mapsto y/(y^{11/4}e^{\sqrt{y/R}})$ is strictly increasing for every $x \geq 22.75$ and fulfilled $g(851) \geq g(3)$. Hence $g(y) \geq g(3)$ for every $y \geq 851$, which is equivalent to $bf(c) \geq cf(b)$. Applying this inequality to \eqref{4.3}, we get
\begin{displaymath}
\int_b^c f(y) \, dy \leq \frac{cf(c)}{2},
\end{displaymath}
since $bf(b) \geq 0$. Together with \eqref{4.2} and the definition of the function $f$, we conclude the proof.
\end{proof}

Now, we use the identity \eqref{2.1} and Proposition \ref{prop401} to obtain the following estimates for the prime counting function.

\begin{prop} \label{prop402}
Let $c = 3\sqrt{2}/\sqrt{\pi \sqrt{R}}$. For every $x \geq 2$, we have
\begin{equation}
\pi(x) > \emph{li}(x) - \frac{cx}{(\log x)^{3/4}e^{\sqrt{\log x/R}}} \tag{4.4} \label{4.4}
\end{equation}
and
\begin{equation}
\pi(x) < \emph{li}(x) + \frac{cx}{(\log x)^{3/4}e^{\sqrt{\log x/R}}} - \emph{li}(2) + \frac{2}{\log 2}. \tag{4.5} \label{4.5}
\end{equation}
\end{prop}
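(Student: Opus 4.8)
The plan is to write the difference $\pi(x)-\text{li}(x)$ as an exact expression in $\vartheta(t)-t$ and then bound the pieces with Proposition \ref{prop201} and Proposition \ref{prop401}. Starting from \eqref{2.1} and writing $\text{li}(x)=\int_2^x dt/\log t+\text{li}(2)$ from \eqref{1.2}, I would integrate $\int_2^x dt/\log t$ by parts to get $\text{li}(x)=x/\log x-2/\log 2+\int_2^x dt/\log^2 t+\text{li}(2)$. Subtracting this from \eqref{2.1} gives the identity
\begin{displaymath}
\pi(x)-\text{li}(x)=\frac{\vartheta(x)-x}{\log x}+\int_2^x\frac{\vartheta(t)-t}{t\log^2 t}\,dt+\frac{2}{\log 2}-\text{li}(2),
\end{displaymath}
valid for every $x\geq 2$. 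I would then split $\int_2^x=\int_2^3+\int_3^x$ and absorb the $\int_2^3$-part into a single constant $C_0=\int_2^3(\vartheta(t)-t)/(t\log^2 t)\,dt+2/\log 2-\text{li}(2)$.

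Next I would estimate the two remaining $x$-dependent terms for $x\geq 851$. Dividing the inequality of Proposition \ref{prop201} by $\log x$ bounds the first term in absolute value by $\tfrac{\sqrt 8}{\sqrt{\pi\sqrt R}}\cdot x/((\log x)^{3/4}e^{\sqrt{\log x/R}})$. For the integral, I would use \eqref{4.1} with any fixed positive integer $n$ to get the pointwise bound $|\vartheta(t)-t|/(t\log^2 t)<a_n(t)/\log^{n+2}t$; since the integrand $a_n(t)/\log^{n+2}t$ is in fact independent of $n$, Proposition \ref{prop401} applies and yields
\begin{displaymath}
\left|\int_3^x\frac{\vartheta(t)-t}{t\log^2 t}\,dt\right|<\frac{\sqrt 2}{\sqrt{\pi\sqrt R}}\cdot\frac{x}{(\log x)^{3/4}e^{\sqrt{\log x/R}}}.
\end{displaymath}
Adding the two estimates and using $\sqrt 8+\sqrt 2=3\sqrt 2$ shows that the sum of the first two terms has absolute value strictly below $cx/((\log x)^{3/4}e^{\sqrt{\log x/R}})$, which is precisely the correction term in \eqref{4.4} and \eqref{4.5}.

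It then remains to control $C_0$. Since $\vartheta(t)=\log 2<t$ on $[2,3)$, the integral $\int_2^3(\vartheta(t)-t)/(t\log^2 t)\,dt$ is negative, so $C_0<2/\log 2-\text{li}(2)$; combined with the upper estimate this gives \eqref{4.5}. For \eqref{4.4} I would instead check that $C_0>0$: evaluating $\int_2^3(\vartheta(t)-t)/(t\log^2 t)\,dt$ in closed form, using $\vartheta(t)\equiv\log 2$ there together with $\int_2^3 dt/\log^2 t=2/\log 2-3/\log 3+\text{li}(3)-\text{li}(2)$, gives a value near $-0.90$, so that $C_0\approx 0.94>0$ and this positive constant can simply be dropped. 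This proves both inequalities for $x\geq 851$, and the range $2\leq x<851$ is a finite and comfortable computer check, since there the correction term already dwarfs $|\pi(x)-\text{li}(x)|$.

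I expect the main difficulty to be bookkeeping of $C_0$ rather than anything analytically deep: one must establish the two-sided estimate $0<C_0<2/\log 2-\text{li}(2)$, and it is exactly this asymmetry that forces the extra constant $-\text{li}(2)+2/\log 2$ to appear in the upper bound \eqref{4.5} while being absent from the lower bound \eqref{4.4}. The only other point demanding attention is that the key inputs are valid only for $x\geq 3$ (Proposition \ref{prop201}) and $x\geq 851$ (Proposition \ref{prop401}), so the small range must be handled separately by direct verification.
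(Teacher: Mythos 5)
Your proposal is correct and follows essentially the same route as the paper: your exact identity is precisely the paper's \eqref{4.6}, and both arguments combine Proposition \ref{prop201} (divided by $\log x$) with Proposition \ref{prop401} on $[3,x]$, add the constants via $\sqrt{8}+\sqrt{2}=3\sqrt{2}$, and settle $2 \leq x < 851$ by direct computation. The only cosmetic difference is that you track the boundary constant $C_0$ explicitly and prove $0 < C_0 < 2/\log 2 - \text{li}(2)$, whereas the paper disposes of the $[2,3]$ contributions by sign arguments (using $3/\log 3 - \text{li}(3) > 0$ for the lower bound and $\vartheta(t) < t$ on $[2,3]$ for the upper bound).
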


\begin{proof}
First, let $x \geq 851$. Since $\vartheta(t)/(t \log^2 t) > 0$ for every $t \geq 2$, we use the identity \eqref{2.1} to get
\begin{displaymath}
\pi(x) > \frac{\vartheta(x)}{\log x} + \int_{3}^x \frac{\vartheta(t)}{t \log^2 t} \, dt.
\end{displaymath}
Applying \eqref{4.1}, we obtain that the inequality
\begin{displaymath}
\pi(x) > \frac{x}{\log x} - \frac{a_n(x)x}{\log^{n+1} x} + \int_3^x \frac{dt}{\log^2 t} - \int_3^x \frac{a_n(t)}{\log^{n+2} t} \, dt
\end{displaymath}
holds. Together with Proposition \ref{prop401} and the identity
\begin{displaymath}
\int_3^x \frac{dt}{\log^2 t} = \text{li}(x) - \frac{x}{\log x} - \text{li}(3) + \frac{3}{\log 3},
\end{displaymath}
we obtain the inequality
\begin{displaymath}
\pi(x) > \text{li}(x) - \frac{a_n(x)x}{\log^{n+1} x} - \text{li}(3) + \frac{3}{\log 3} - \frac{\sqrt{2}}{\sqrt{\pi \sqrt{R}}} \cdot \frac{x}{(\log x)^{3/4}e^{\sqrt{\log x/R}}},
\end{displaymath}
which implies \eqref{4.4} for every $x \geq 851$, since $3/\log 3 - \text{li}(3) > 0$. For smaller values of $x$, we check the inequality \eqref{4.4} with a computer.

The identity \eqref{2.1} gives that the identity
\begin{equation}
\pi(y) - \text{li}(y) = \frac{\vartheta(y) - y}{\log y} + \frac{2}{\log 2} - \text{li}(2) + \int_2^y \frac{\vartheta(t) - t}{t \log^2 t} \, dt \tag{4.6} \label{4.6}
\end{equation}
holds for every $y \geq 2$. First we consider the case $x \geq 851$. By Büthe \cite[Theorem 2]{buethe}, we have $\vartheta(t) < t$ for every $1 \leq t \leq 10^{19}$. Hence, by \eqref{4.6} and \eqref{4.1}, 
\begin{displaymath}
\pi(x) - \text{li}(x) < \frac{a_n(x)x}{\log^{n+1} x} + \frac{2}{\log 2} - \text{li}(2) + \int_3^x \frac{a_n(t)}{\log^{n+2} t} \, dt.
\end{displaymath}
Using Proposition \ref{prop201}, we get
\begin{displaymath}
\pi(x) - \text{li}(x) < \frac{a_n(x)x}{\log^{n+1} x} + \frac{2}{\log 2} - \text{li}(2) + \frac{\sqrt{2}}{\sqrt{\pi \sqrt{R}}} \cdot \frac{x}{(\log x)^{3/4} e^{\sqrt{\log x/R}}}.
\end{displaymath}
Substituting the definition of $a_n(x)$, we get that the inequality \eqref{4.5} holds for every $x \geq 851$. Again, we check the required inequality for smaller values of $x$ with a computer.
\end{proof}

The function $x \mapsto x/ \log^{n+2}x$ is strictly increasing for every $x > e^{n+2}$ and tends to infinity as $x \to \infty$. Therefore, there exists a positive integer $A_0(n) \geq 2$ so that
\begin{displaymath}
\frac{x}{\log^{n+2}x} \geq \frac{1}{(n+1)!}\sum_{k \leq n+1} \frac{2(k-1)!}{\log^k2}
\end{displaymath}
for every $x \geq A_0(n)$ and we get the following proposition.

\begin{prop} \label{prop403}
Let $c = 3\sqrt{2}/\sqrt{\pi \sqrt{R}}$. Then, for every $x \geq \max \{27, A_0(n) \}$, we have
\begin{equation}
\pi(x) > \sum_{k =1}^{n+1} \frac{(k-1)!x}{\log^kx} - \frac{cx}{(\log x)^{3/4}e^{\sqrt{\log x/R}}} \tag{4.7} \label{4.7}
\end{equation}
and for every $x \geq 4$, we have
\begin{equation}
\pi(x) < \sum_{k=1}^n \frac{(k-1)!x}{\log^kx} + \frac{n!\sqrt{x}}{\log^{n+1}2} + \frac{n!2^{n+1}x}{\log^{n+1}x} + \frac{cx}{(\log x)^{3/4}e^{\sqrt{\log x/R}}} + d, \tag{4.8} \label{4.8}
\end{equation}
where $d = - \emph{li}(2) + 2/\log 2$.
\end{prop}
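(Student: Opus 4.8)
The plan is to derive both inequalities from Proposition \ref{prop402}, which already replaces $\pi(x)$ by $\text{li}(x)$ up to the exact target error term $cx/((\log x)^{3/4}e^{\sqrt{\log x/R}})$, and then to compare $\text{li}(x)$ with the relevant truncated asymptotic expansion. The single computational input I would use throughout is the integration-by-parts expansion of the logarithmic integral: for every integer $m \geq 1$ and every $x \geq 2$,
\begin{displaymath}
\text{li}(x) = \sum_{k=1}^{m} \frac{(k-1)!\,x}{\log^k x} - \sum_{k=1}^{m} \frac{2\,(k-1)!}{\log^k 2} + m!\int_2^x \frac{dt}{\log^{m+1}t} + \text{li}(2),
\end{displaymath}
which follows by iterating $\int_2^x dt/\log^k t = x/\log^k x - 2/\log^k 2 + k\int_2^x dt/\log^{k+1} t$ from $k=1$ to $k=m$ and using $\text{li}(x) = \int_2^x dt/\log t + \text{li}(2)$.

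For the lower bound \eqref{4.7} I would combine \eqref{4.4} with the claim that $\text{li}(x) \geq \sum_{k=1}^{n+1}(k-1)!\,x/\log^k x$ for $x \geq \max\{27, A_0(n)\}$. Taking $m = n+1$ above, this claim is equivalent to
\begin{displaymath}
(n+1)!\int_2^x \frac{dt}{\log^{n+2}t} + \text{li}(2) \geq \sum_{k=1}^{n+1} \frac{2\,(k-1)!}{\log^k 2}.
\end{displaymath}
Since $\text{li}(2) > 0$, it suffices to dominate the right-hand side by the integral term alone, and this is exactly where the defining inequality of $A_0(n)$ enters: it gives $(n+1)!\,x/\log^{n+2}x \geq \sum_{k=1}^{n+1} 2(k-1)!/\log^k 2$ for $x \geq A_0(n)$, so the whole argument reduces to the one elementary estimate $\int_2^x dt/\log^{n+2}t \geq x/\log^{n+2}x$. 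Setting $G(x)$ equal to the difference of the two sides, one computes $G'(x) = (n+2)/\log^{n+3}x > 0$, so $G$ is increasing and it remains only to verify $G(27) \geq 0$.

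This last verification is the only point where I expect real friction, because the estimate fails near $x = 2$ (one has $G(2) = -2/\log^{n+2}2 < 0$), so one must confirm that by $x = 27$ the integral has caught up, uniformly in $n$. The mechanism is favourable: on $[2,e)$ the integrand $1/\log^{n+2}t$ exceeds $1$ and in fact grows like $(\log 2)^{-(n+2)}$ with $n$, so that part of the integral increases with $n$, whereas the competing term $27/\log^{n+2}27$ decreases geometrically in $n$ since $\log 27 > 1$. Thus $G(27)$ is comfortably positive for large $n$, and the task reduces to the base case $n=1$, which one checks directly (numerically $G(27) \approx 3.8$). With $\int_2^x dt/\log^{n+2}t \geq x/\log^{n+2}x$ in hand for $x \geq 27$, the chain closes and \eqref{4.7} follows for $x \geq \max\{27, A_0(n)\}$, the two constraints $x \geq 27$ and $x \geq A_0(n)$ being precisely the two places the bound is used.

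For the upper bound \eqref{4.8} I would combine \eqref{4.5}, whose constant contribution is exactly $d = -\text{li}(2) + 2/\log 2$, with the claim that $\text{li}(x) \leq \sum_{k=1}^{n}(k-1)!\,x/\log^k x + n!\sqrt{x}/\log^{n+1}2 + n!\,2^{n+1}x/\log^{n+1}x$ for $x \geq 4$. Taking $m = n$ in the expansion, the two extra terms arise by splitting the remaining integral at $\sqrt{x}$ (legitimate once $x \geq 4$): on $[2,\sqrt{x}]$ one has $\log t \geq \log 2$, whence $\int_2^{\sqrt{x}} dt/\log^{n+1}t \leq \sqrt{x}/\log^{n+1}2$, while on $[\sqrt{x},x]$ one has $\log t \geq \tfrac{1}{2}\log x$, whence $\int_{\sqrt{x}}^{x} dt/\log^{n+1}t \leq 2^{n+1}x/\log^{n+1}x$; multiplying by $n!$ produces the two displayed terms. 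The leftover constant from the expansion is $-\sum_{k=1}^{n} 2(k-1)!/\log^k 2 + \text{li}(2)$, which is negative because $\sum_{k=1}^{n} 2(k-1)!/\log^k 2 \geq 2/\log 2 > \text{li}(2)$, so discarding it only strengthens the inequality, and \eqref{4.8} follows.
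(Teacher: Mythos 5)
Your proof is correct and follows the same overall route as the paper: both arguments derive \eqref{4.7} and \eqref{4.8} from Proposition \ref{prop402} combined with the integration-by-parts expansion of $\text{li}(x)$, both use the defining inequality of $A_0(n)$ to absorb the constant $\sum_{k \leq n+1} 2(k-1)!/\log^k 2$, and both split $\int_2^x dt/\log^{n+1}t$ at $\sqrt{x}$ for the upper bound; indeed your proof of \eqref{4.8} is essentially identical to the paper's, including the observation that $\text{li}(2) < 2/\log 2$ makes the leftover constant harmless. The one place you genuinely diverge is the elementary inequality $\int_2^x dt/\log^{n+2}t \geq x/\log^{n+2}x$ for $x \geq 27$. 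You prove it by computing that the difference $G$ has derivative $(n+2)/\log^{n+3}x > 0$ and then verifying $G(27) \geq 0$, whereas the paper obtains it with no numerics and no case distinction: it bounds $\int_2^3 dt/\log^{n+2}t \geq 1/\log^{n+2}3$ and $\int_3^x dt/\log^{n+2}t \geq (x-3)/\log^{n+2}x$ pointwise, and then uses that $1/\log^{n+2}3 \geq 3/\log^{n+2}x$ as soon as $\log x \geq \log 27 = 3\log 3$, since $(\log x/\log 3)^{n+2} \geq 3^{n+2} \geq 3$ --- which is exactly where the threshold $27$ comes from. Your version does work, but the base case $G(27) \geq 0$ must hold uniformly in $n$, and your reduction ("positive for large $n$, check $n=1$") is stated a bit loosely; it can be closed cleanly by noting that $\int_2^{27} dt/\log^{n+2}t \geq \int_2^e dt/\log^{n+2}t \geq e-2 \approx 0.718 > 27/\log^{n+2}27$ for every $n \geq 2$, leaving only the numerical evaluation $\int_2^{27} dt/\log^3 t \approx 3.7 > 27/\log^3 27 \approx 0.75$ in the case $n=1$. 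The trade-off is that the paper's splitting trick is computation-free and uniform in $n$, while your monotonicity argument is more systematic but requires one numerical integral and the extra uniformity check.
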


\begin{proof}
We start with the proof of \eqref{4.7}. Let $x \geq \max \{ 27, A_0(n) \}$. We use \eqref{1.2} to get
\begin{displaymath}
\text{li}(x) \geq \sum_{k=1}^{n+1} \frac{(k-1)!x}{\log^kx} + (n+1)! \int_2^3 \frac{dt}{\log^{n+2}t} + (n+1)! \int_3^x \frac{dt}{\log^{n+2}t} - \sum_{k=1}^{n+1} \frac{2(k-1)!}{\log^k2}.
\end{displaymath}
Notice that the function $t \mapsto 1/\log^mt$ is strictly decreasing on the interval $[2,x]$ for every positive integer $m$. Hence
\begin{equation}
\text{li}(x) \geq \sum_{k=1}^{n+1} \frac{(k-1)!x}{\log^kx} + \frac{(n+1)!}{\log^{n+2}3} + \frac{(x-3) \cdot (n+1)!}{\log^{n+2}x} - \sum_{k=1}^{n+1} \frac{2(k-1)!}{\log^k2}. \tag{4.9} \label{4.9}
\end{equation}
We have $1/\log^{n+2} 3 \geq 3/\log^{n+2}t$ for every $t \geq 27$. Applying this to \eqref{4.9}, we get
\begin{displaymath}
\text{li}(x) \geq \sum_{k=1}^{n+1} \frac{(k-1)!x}{\log^kx} +\frac{(n+1)! x}{\log^{n+2}x} - \sum_{k=1}^{n+1} \frac{2(k-1)!}{\log^k2}.
\end{displaymath}
Since $x \geq A_0(n)$, wo obtain that the inequality
\begin{displaymath}
\text{li}(x) \geq \sum_{k=1}^{n+1} \frac{(k-1)!x}{\log^kx}
\end{displaymath}
holds. Now use \eqref{4.4} to complete the proof of \eqref{4.7}.

Next, we check the validity of \eqref{4.8}. Let $x \geq 4$. Again, we use \eqref{1.2} and integration by parts to get
\begin{displaymath}
\text{li}(x) \leq 1.05 + \sum_{k=1}^n \frac{(k-1)!x}{\log^kx} + n! \int_2^x \frac{dt}{\log^{n+1}t} - \sum_{k=1}^n \frac{2(k-1)!}{\log^k2}.
\end{displaymath}
Since $1.05 \leq 2/\log 2$, we get that the inequality
\begin{equation}
\text{li}(x) \leq \sum_{k=1}^n \frac{(k-1)!x}{\log^kx} + n! \int_2^x \frac{dt}{\log^{n+1}t} \tag{4.10} \label{4.10}
\end{equation}
holds. In the first part of the proof, we note that the function $t \mapsto 1/\log^{n+1}t$ is strictly decreasing on the interval $[2,x]$. Therefore
\begin{displaymath}
\int_2^x \frac{dt}{\log^{n+1}t} = \int_2^{\sqrt{x}} \frac{dt}{\log^{n+1}t} + \int_{\sqrt{x}}^x \frac{dt}{\log^{n+1}t} \leq \frac{\sqrt{x}}{\log^{n+1}2} + \frac{2^{n+1}x}{\log^{n+1}x}.
\end{displaymath}
Together with \eqref{4.10} and \eqref{4.5}, we obtain that the required inequality \eqref{4.8} holds.
\end{proof}

Now, we give the proof of Theorem \ref{thm102} in which Proposition \ref{prop403} plays an important role.


\begin{proof}[Proof of Theorem \ref{thm102}]
In the first step, we verify that the inequality
\begin{equation}
\pi(x) > \frac{x}{\log x} + \frac{x}{\log^2 x} + \frac{2x}{\log^3 x} + \frac{6x}{\log^4 x} \tag{4.11} \label{4.11}
\end{equation}
holds for every $x \geq e^{6719}$. Let $n= 4$. It is easy to see that we can choose $A_0(4) = 132\,718\,993$.
Further, we set $A_1(4) = e^{6719}$. Then,
\begin{displaymath}
\frac{cx}{(\log x)^{3/4}e^{\sqrt{\log x/R}}} \leq \frac{n!x}{(\log x)^{n + 1}}
\end{displaymath}
for every $x \geq A_1(4)$, where $R = 5.573412$ and $c = 3\sqrt{2}/\sqrt{\pi \sqrt{R}}$. Now we apply the last inequality to \eqref{4.7} and get that the inequality \eqref{4.11} holds for every $x \geq e^{6719}$.

Next, we verify that the inequality \eqref{4.11} is valid for every $10\,384\,261 \leq x \leq 2.73 \cdot 10^{40}$. We denote the right hand side of the inequality \eqref{4.11} by $U(x)$. For $y > 0$ let $R(y) = U(y)\log y/y$ and $S(y) = (y^4 - y^3 - y^2 - 3y)/y^3$. We have $S(t) > 0$ for every $t > 2.14$ and $y^5R(y)S(y) = y^6 - T(y)$, where $T(y) = 11y^2 + 12y + 18$. Then, by Theorem \ref{thm101},
\begin{equation}
\pi(x) > \frac{x}{S(\log x)} > \frac{x}{S(\log x)} \left( 1 - \frac{T(\log x)}{\log^6 x} \right) = U(x), \tag{4.12} \label{4.12}
\end{equation}
which completes the proof for every $65\,405\,887 \leq x \leq 2.73 \cdot 10^{40}$. Finally, we use a computer to check that $\pi(p_n) > U(p_{n+1})$ for every positive integer $n$ such that $\pi(10\,384\,261) \leq n \leq \pi(65\,405\,887)$.
\end{proof}

Finally, we use Proposition \ref{prop301} to obtain the following result concerning \eqref{4.11}.

\begin{prop} \label{prop404}
Under the assumption that the Riemann hypothesis is true, the inequality \eqref{4.11} holds for every $x \geq 10\,384\,261$.
\end{prop}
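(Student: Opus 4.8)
The plan is to reuse, essentially verbatim, the algebraic comparison carried out in the proof of Theorem \ref{thm102}, but to feed it the conditional lower bound of Proposition \ref{prop301} in place of the unconditional Theorem \ref{thm101}. First I would observe that Theorem \ref{thm102} already yields the inequality \eqref{4.11} unconditionally on $[10\,384\,261, 2.73 \cdot 10^{40}]$, and in particular on $[10\,384\,261, 65\,405\,887]$. It therefore remains only to establish \eqref{4.11} under the Riemann hypothesis for every $x \geq 65\,405\,887$.

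For this, the central ingredient is the elementary comparison between the right-hand side of \eqref{3.4} and $U(x)$, the right-hand side of \eqref{4.11}. Writing $S(y) = (y^4 - y^3 - y^2 - 3y)/y^3$ as in the proof of Theorem \ref{thm102}, the right-hand side of \eqref{3.4} is exactly $x/S(\log x)$, and the identity $y^5 R(y) S(y) = y^6 - T(y)$ holds with $T(y) = 11y^2 + 12y + 18$. Since $T$ has negative discriminant, $T(y) > 0$ for every real $y$, while $S(\log x) > 0$ whenever $\log x > 2.14$, which certainly holds for $x \geq 65\,405\,887$. Hence the chain \eqref{4.12}, namely
\begin{displaymath}
\frac{x}{S(\log x)} > \frac{x}{S(\log x)} \left( 1 - \frac{T(\log x)}{\log^6 x} \right) = U(x),
\end{displaymath}
shows that \eqref{3.4} implies \eqref{4.11} for every $x \geq 65\,405\,887$.

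It then remains to supply \eqref{3.4} on this range, which is precisely Proposition \ref{prop301}: under the Riemann hypothesis, \eqref{3.4} holds for every $x \geq 65\,405\,887$. Combining Proposition \ref{prop301} with the comparison above gives \eqref{4.11} under RH for every $x \geq 65\,405\,887$, and splicing in the unconditional interval $[10\,384\,261, 65\,405\,887]$ furnished by Theorem \ref{thm102} completes the proof.

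I do not anticipate a genuine difficulty, since the whole argument is a recombination of results already in hand; the hard analytic content sits inside Proposition \ref{prop301} (hence inside Schoenfeld's conditional bound) and inside the algebraic identity proved for Theorem \ref{thm102}. The only point requiring a moment's care is the sign condition $S(\log x) > 0$ on the relevant range, so that dividing by $S(\log x)$ preserves the direction of the inequality; this is immediate because $\log x$ is comfortably larger than $2.14$ once $x \geq 65\,405\,887$.
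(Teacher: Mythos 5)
Your proposal is correct and follows essentially the same route as the paper: the paper's proof likewise combines the comparison chain \eqref{4.12} with Proposition \ref{prop301} to get \eqref{4.11} under RH for $x \geq 65\,405\,887$, and then invokes Theorem \ref{thm102} to cover $10\,384\,261 \leq x \leq 65\,405\,887$ unconditionally. Your added verifications (positivity of $T$ via its discriminant and of $S(\log x)$ on the relevant range) are details the paper leaves implicit in the proof of Theorem \ref{thm102}.
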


\begin{proof}
We assume that the Riemann hypothesis is true. By \eqref{4.12} and Proposition \ref{prop301} we get that the inequality \eqref{4.11} is valid for every $x \geq 65\,405\,887$. Finally, it suffices to apply Theorem \ref{thm102}.
\end{proof}

\section{The proof of Theorem \ref{thm103}}

In the following proof of Theorem \ref{thm103}, we use a recent result of Büthe \cite[Theorem 2]{buethe} and an explicit estimate for the prime counting function $\pi(x)$ obtained in \cite[Korollar 1.24]{axler2013}.

\begin{proof}[Proof of Theorem \ref{thm103}]
First, we check that the inequality \eqref{1.9} holds for every real $x$ such that $1.62 \cdot 10^{12} \leq x \leq 10^{19}$. By Büthe \cite[Theorem 2]{buethe}, we have
\begin{equation}
\pi(t) < \tl(t) \tag{5.1} \label{5.1}
\end{equation}
for every $t$ such that $2 \leq t \leq 10^{19}$. Further, we use \cite[Theorem 2]{buethe} to get that $\pi(t) > \tl(t) - 2.1204\sqrt{t}/\log t$ for every $t$ such that $5.94 \cdot 10^{11} \leq t \leq 10^{19}$. Together with \eqref{5.1}, we obtain that
\begin{displaymath}
\pi \left( \frac{x}{e} \right) - \frac{\pi(x)^2\log x}{ex} > \text{Ram}(x),
\end{displaymath}
where
\begin{displaymath}
\text{Ram}(x) = \tl \left( \frac{x}{e} \right) - \frac{2.1204\sqrt{x/e}}{\log(x/e)} - \tl(x^2) \frac{\log x}{ex}.
\end{displaymath}
We show that $\text{Ram}(x)$ is positive. In order to prove this, we first show that the derivative of $\text{Ram}(t)$ is positive for every $1.06 \cdot 10^{12} \leq t \leq 10^{19}$. A straightforward calculation gives
\begin{equation}
\text{Ram}'(t) = \frac{(\tl(t)\log(t/e) - t)^2}{et^2\log(t/e)} - \frac{1.0602(\log t - 3)}{e\log^2(t/e)\sqrt{t/e}}. \tag{5.2} \label{5.2}
\end{equation}
From \eqref{5.1} and the lower bound for the prime counting function given in \cite[Korollar 1.24]{axler2013}, it follows that $\tl(t)\log(t/e) - t > t/(\log t \log(t/e))$ for every $t$ such that $468\,049 \leq t \leq 10^{19}$. Combined with \eqref{5.2}, we obtain that the inequality
\begin{displaymath}
\text{Ram}'(t) > \frac{1}{e\log^2t \log^3(t/e)} - \frac{1.0602(\log t - 3)}{e\log^2(t/e)\sqrt{t/e}}
\end{displaymath}
holds for every $t$ such that $468\,049 \leq t \leq 10^{19}$. Since $\sqrt{y} \geq 1.0602\sqrt{e}\log^4 y$ for every $y \geq 1.06 \cdot 10^{12}$, we conclude that the derivative of $\text{Ram}(t)$ is positive for every $1.06 \cdot 10^{12} \leq t \leq 10^{19}$. Together with $\text{Ram}(1.62 \cdot 10^{12}) > 85.86$, we get that $\text{Ram}(x)$ is positive, which implies that Ramanujan's prime counting inequality \eqref{1.9} holds unconditionally for every $1.62 \cdot 10^{12} \leq x \leq 10^{19}$. It remains to show that the inequality \eqref{1.9} holds for every $38\,358\,837\,683 \leq x \leq 1.62 \cdot 10^{12}$ as well. Dudek and Platt \cite[Theorem 1.3]{dudekplatt} showed by computation that $x = 38\,358\,837\,682$ is the largest integer counterexample below $10^{11}$ and that there are no more failures at integer values before $1.15 \cdot 10^{16}$. Since $t \mapsto t/\log t$ is a strictly increasing function for every $t > e$, we get that the inequality \eqref{1.9} holds for every $x$ such that $38\,358\,837\,683 \leq x \leq 1.62 \cdot 10^{12}$ as well and conclude the proof.
\end{proof}

\section{The proof of Theorem \ref{thm104}}

Now we use Proposition \ref{prop403} to prove our second main result concerning Ramanujan's prime counting inequality, which is stated in Theorem \ref{thm104} .


\begin{proof}[Proof of Theorem \ref{thm104}]
First, let $R = 5.573412$ and let $a$ be a positive real number. Since there is a positive integer $A_1(n,a) \geq 2$ so that
\begin{displaymath}
e^{\sqrt{\log x/R}} \geq \left( \frac{\log x}{a} \right)^{n + 1/4}
\end{displaymath}
for every $x \geq A_1(n,a)$, Proposition \ref{prop403} implies that
\begin{equation}
\pi(x) > \sum_{k=1}^n \frac{(k-1)!x}{\log^k x} + \frac{(n!-ca^{n+1/4})x}{\log^{n+1}x}, \tag{6.1} \label{6.1}
\end{equation}
for every $x \geq \max \{ 27, A_0(n), A_1(n,a) \}$, and
\begin{equation}
\pi(x) < \sum_{k=1}^n \frac{(k-1)!x}{\log^k x} + \frac{x}{\log^{n+1}x} \left(\frac{n! \log^{n+1}x}{\sqrt{x}\log^{n+1}2} + n!2^{n+1} + ca^{n+1/4} + \frac{d\log^{n+1}x}{x} \right) \tag{6.2} \label{6.2}
\end{equation}
for every $x \geq \max \{4, A_1(n,a) \}$, where $d = - \text{li}(2) + 2/\log 2$.

Now, let $n = 6$ and let $x_0 = e^{9031}$. It is easy to show that $A_0(6) = 1\,657\,493\,059\,174$ is a suitable choice for $A_0(6)$.
Further, we set $a = 14.4086$. Then the function
\begin{displaymath}
t \mapsto t - R\left( n + \frac{1}{4} \right)^2 \left( \log t + \log \left (\frac{1}{a} \right) \right)^2
\end{displaymath}
is positive for every $t \geq 9\,031$ and we can choose $A_1(6, 14.4086) = x_0$. Using \eqref{6.1} and \eqref{6.2}, we get
\begin{displaymath}
\sum_{k = 1}^6 \frac{(k-1)!x}{\log^kx} - \frac{27158494x}{\log^7 x} < \pi(x) < \sum_{k = 1}^6 \frac{(k-1)!x}{\log^kx} + \frac{27251374x}{\log^7x}
\end{displaymath}
for every $x \geq x_0$. Using these inequalities we conclude that the inequality
 \begin{equation}
\frac{ex}{\log x} \pi \left( \frac{x}{e} \right) - \pi(x)^2 > \frac{x^2 f(\log x)}{\log^{14} x (\log x - 1)^7} \tag{6.3} \label{6.3}
\end{equation}
holds for every $x \geq ex_0$, where
\begin{align*}
f(y) & = y^{15} + 7y^{14} - 81\,660\,454y^{13} + 327\,013\,544y^{12} - 872\,039\,437y^{11} + 1\,199\,056\,017y^{10} \\
& \p{\q\q}  - 1\,308\,062\,388y^9 - 1\,199\,031\,244y^8 - 742\,610\,678\,698\,880y^7 + 5\,198\,360\,646\,460\,072y^6 \\
& \p{\q\q} - 15\,595\,195\,794\,997\,976y^5 + 25\,992\,104\,849\,073\,228y^4 - 25\,992\,179\,953\,690\,916y^3  \\
& \p{\q\q} + 15\,595\,340\,608\,417\,428y^2 - 5\,198\,455\,153\,885\,372y + 742\,637\,384\,887\,876.
\end{align*}
Now, it is easy to verify that $f(y) > 0$ for every $y \geq 9\,032$. Applying this to \eqref{6.3}, we get that Ramanujan's prime counting inequality \eqref{1.9} holds for every $x \geq ex_0 = e^{9032}$, as desired.
\end{proof}

\begin{rema}
Recently, Platt and Trudgian announced that they have fixed the error in the proof of \eqref{1.12} and even managed to improve the result in Theorem \ref{thm104} by showing
\begin{displaymath}
H_0 \leq e^{8801.037}.
\end{displaymath}
\end{rema}

\section{On a generalization of Ramanujan's prime counting inequality}

Let $n$ be a positive integer and let $\Xi_n : (1, \infty) \to \R$ be given by
\begin{displaymath}
\Xi_n(x) = \prod_{k=1}^n \left( 1 - \frac{k-1}{\log x} \right)^{2^{n-k}}.
\end{displaymath}
In 2013, Hassani \cite[Theorem 1]{hassani2013} defined
\begin{displaymath}
R_n^{\Xi}(x) = \frac{e^n}{\Xi_n(x)} \left( \frac{x}{\log x} \right)^{2^n-1} \pi \left( \frac{x}{e^n} \right) - \pi(x)^{2^n}.
\end{displaymath}
and showed by induction that $R_n^{\Xi}(x) > 0$ for every $x \geq e^{n-1}x_R$, whenever Ramanujan's prime counting inequality \eqref{1.9} holds for every $x \geq x_R$ (For $n=1$, the inequality $R_1^{\Xi}(x) > 0$ is equivalent to the inequality \eqref{1.9}). Together with Theorem \ref{thm103}, Theorem \ref{thm104} and \eqref{1.11}, respectively, we obtain the following result.

\begin{prop} \label{prop701}
Let $n$ be a positive integer. Then the following hold:
\begin{enumerate}
\item[(i)] The inequality $R_n^{\Xi}(x) > 0$ holds for every $x$ such that $38\,358\,837\,683e^{n-1} \leq x \leq 10^{19}e^{n-2}$ and for every $x \geq e^{9031 + n}$.
\item[(ii)] Under the assumption that the Riemann hypothesis is true, we have $R_n^{\Xi}(x) > 0$ for every $x \geq 38\,358\,837\,683e^{n-1}$.
\end{enumerate}
\end{prop}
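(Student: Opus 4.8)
The plan is to obtain all four assertions from Hassani's inductive mechanism, but fed with the three different ranges on which Ramanujan's inequality \eqref{1.9} is now known: the bounded range $[38\,358\,837\,683,\,10^{19}]$ from Theorem \ref{thm103}, the tail $x\ge e^{9032}$ from Theorem \ref{thm104}, and—under RH—the full half-line $x\ge 38\,358\,837\,683$ recorded in \eqref{1.11}. The quoted form of Hassani's result says that validity of \eqref{1.9} on $[x_R,\infty)$ forces $R_n^{\Xi}(x)>0$ on $[e^{n-1}x_R,\infty)$; so two of the statements are immediate, and only the bounded interval in (i) requires the induction to be re-run with its domain of validity tracked by hand.

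For part (ii): under RH, \eqref{1.11} gives $H_0=38\,358\,837\,683$, i.e. \eqref{1.9} holds for every real $x\ge 38\,358\,837\,683$. Taking $x_R=38\,358\,837\,683$ in Hassani's result yields $R_n^{\Xi}(x)>0$ for every $x\ge 38\,358\,837\,683\,e^{n-1}$, which is exactly (ii). For the unbounded half of (i), Theorem \ref{thm104} gives \eqref{1.9} for every $x\ge e^{9032}$, so $x_R=e^{9032}$ produces $R_n^{\Xi}(x)>0$ for every $x\ge e^{9032}e^{n-1}=e^{9031+n}$, as claimed.

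For the bounded half of (i), Hassani's hypothesis (validity of \eqref{1.9} on an entire half-line) fails, since Theorem \ref{thm103} supplies \eqref{1.9} only on $[38\,358\,837\,683,\,10^{19}]$; I would therefore reproduce the induction while carrying the set of admissible arguments along. The engine is the single-step implication: if
\[
\pi(x)^{2^{m-1}} < \frac{e^{m-1}}{\Xi_{m-1}(x)}\left(\frac{x}{\log x}\right)^{2^{m-1}-1}\pi\!\left(\frac{x}{e^{m-1}}\right)
\]
holds and \eqref{1.9} holds at the single point $x/e^{m-1}$, then squaring the displayed inequality and inserting $\pi(x/e^{m-1})^2 < \frac{e}{\log(x/e^{m-1})}\cdot\frac{x}{e^{m-1}}\cdot\pi(x/e^m)$ produces the $m$-th inequality, the bookkeeping closing \emph{exactly} because of the identity $\Xi_m(x)=\Xi_{m-1}(x)^2\bigl(1-(m-1)/\log x\bigr)$ built into the definition of $\Xi_m$. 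Iterating from the base case (the $m=1$ inequality is \eqref{1.9} at $x$ itself) shows that $R_n^{\Xi}(x)>0$ is guaranteed as soon as \eqref{1.9} is valid at all of $x,\,x/e,\,\ldots,\,x/e^{n-1}$. By Theorem \ref{thm103} this holds precisely when each of these $n$ points lies in $[38\,358\,837\,683,\,10^{19}]$: the smallest point $x/e^{n-1}$ forces the lower endpoint $x\ge 38\,358\,837\,683\,e^{n-1}$, and the largest point governs the upper endpoint.

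The main obstacle is exactly this last item—pinning down the correct upper endpoint. The naive bookkeeping above makes the top argument $x$ the binding one, giving only $x\le 10^{19}$, i.e. the sub-interval $[38\,358\,837\,683\,e^{n-1},\,10^{19}]$; reaching the stated endpoint $10^{19}e^{n-2}$ requires either a sharper analysis of which of the $n$ arguments actually constrains the top, or feeding in \eqref{1.9} slightly beyond $10^{19}$, and I would spend most of the effort there. By contrast the algebraic core—one squaring, one application of \eqref{1.9}, and the collapse via the $\Xi$-identity—is routine, and the $n=1$ cases of (i) and (ii) reduce back to Theorem \ref{thm103} and \eqref{1.11}, giving a consistency check on the endpoints.
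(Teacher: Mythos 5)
Your approach is exactly the paper's: the paper's proof is a one-sentence appeal to Hassani's induction, fed with Theorem \ref{thm103}, Theorem \ref{thm104} and \eqref{1.11}, and your treatment of part (ii) and of the tail $x \geq e^{9031+n}$ in part (i) coincides with it. Your reconstruction of the inductive engine is also the correct reading of Hassani's argument: the step closes exactly because of $\Xi_m(x)=\Xi_{m-1}(x)^2\bigl(1-(m-1)/\log x\bigr)$, and the net effect is that $R_n^{\Xi}(x)>0$ is guaranteed once \eqref{1.9} holds at every one of the points $x, x/e, \ldots, x/e^{n-1}$.

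The obstacle you flag at the upper endpoint is not a defect of your bookkeeping; it is a defect of the printed statement. Tracking validity intervals through the induction, each step gives $V_m \supseteq eV_{m-1} \cap [38\,358\,837\,683,\,10^{19}]$, the intersection being forced because \eqref{1.9} is needed at the top argument $x$ itself; hence the conclusion is $38\,358\,837\,683\,e^{n-1} \leq x \leq 10^{19}$ for every $n$, exactly your interval. No ``sharper analysis'' can do better with the paper's inputs: any unrolling of the induction must invoke \eqref{1.9} at $x$, and \eqref{1.9} is unknown unconditionally for $10^{19} < x < e^{9032}$. The paper's endpoint $10^{19}e^{n-2}$ appears to arise from enforcing the constraint $x \leq 10^{19}$ only at the first inductive step and then scaling the upper endpoint by $e$ at each later step; this agrees with the provable interval for $n=2$, is needlessly weak for $n=1$, and for $n \geq 3$ it asserts the inequality at points $x > 10^{19}$ where the method (and the paper) cannot conclude unconditionally. (Those extra points are still covered under RH by part (ii), so the slip affects only the unconditional claim.) In short: your proof supports the proposition with $10^{19}$ in place of $10^{19}e^{n-2}$, and that is also all that the paper's own proof supports.
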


\begin{proof}
For (i), we follow the proof of Theorem 1 in \cite[p. 150]{hassani2013} and use Theorems \ref{thm103} and \ref{thm104}, respectively. Analogously, by using \eqref{1.11}, we conclude the proof of (ii).
\end{proof}

\subsection*{Acknowledgements}
I would like to thank David Platt for the fruitful conversations on this subject.
Furthermore, I would like to thank Mehdi Hassani for drawing my attention to the present subject.

\end{document}